\documentclass[12pt]{amsart}
\usepackage[alphabetic]{amsrefs}
\usepackage{mathdots, blkarray}
\usepackage{multicol}
\usepackage{graphicx}
\usepackage{amscd}
\usepackage{upgreek}
\usepackage{stmaryrd}
\usepackage{longtable}
\usepackage[T1]{fontenc}
\usepackage{latexsym, amsmath, amssymb, amsthm}
\usepackage[svgnames]{xcolor}
\usepackage{rsfso}
\usepackage[mathscr]{eucal}
\usepackage{mathtools}
\usepackage{mathptmx}
\usepackage{titletoc}
\usepackage{wrapfig}
\usepackage{float}
\usepackage{xypic}
\usepackage{microtype}
\usepackage{dsfont}
\usepackage{xcolor}
\usepackage{color}
\usepackage[colorlinks = true,
            linkcolor  = DarkBlue,
            urlcolor   = DarkRed,
            citecolor  = DarkGreen]{hyperref}
\usepackage{hyperref}
\allowdisplaybreaks	
\usepackage[centering, includeheadfoot, hmargin=1.0in, tmargin=0.5in, 
  bmargin=0.6in, headheight=6pt]{geometry}
\newtheorem{theorem}{Theorem}[section]
\newtheorem{prop}[theorem]{Proposition}

\newtheorem{lem}[theorem]{Lemma}

\newtheorem{thm}[theorem]{Theorem}

\newtheorem{rem}[theorem]{\rm\textsc{Remark}}
\newtheorem{exam}[theorem]{\rm\textsc{Example}}

\newcommand{\ideal}[1]{\ensuremath{\left\langle #1 \right\rangle}}

\newcommand{\oeq}[1]{\ensuremath{\overset{(\ref{#1})}{=}}}
\newcommand{\bslash}{\kern-0.1em\texttt{\scalebox{0.6}[1]{/}}\kern-0.15em \texttt{\scalebox{0.6}[1]{/}}}

\DeclareMathOperator{\ad}{ad}

\DeclareMathOperator{\dia}{diag}
	
\DeclareMathOperator{\SL}{SL}
\DeclareMathOperator{\Sp}{Sp}

\DeclareMathOperator{\trace}{tr}
\DeclareMathOperator{\GL}{GL}
\DeclareMathOperator{\cha}{char}

\DeclareMathOperator{\LM}{LM}
\DeclareMathOperator{\LC}{LC}
\DeclareMathOperator{\spol}{spol}
\DeclareMathOperator{\lcm}{lcm}

\newcommand{\A}{\mathcal{A}} 
\newcommand{\LL}{\mathcal{L}} 
\newcommand{\B}{\mathcal{B}} 
\newcommand{\D}{\mathcal{D}} 
\newcommand{\C}{\mathbb{C}}

\newcommand{\M}{\mathcal{M}} 
 
\newcommand{\R}{\mathbb{R}} 
\newcommand{\K}{\mathbb{K}} 
\newcommand{\F}{\mathbb{F}} 
\newcommand{\p}{\mathfrak{p}} 
\newcommand{\LA}{\Longleftarrow} 
\newcommand{\RA}{\Longrightarrow} 
 
\newcommand{\ra}{\longrightarrow}

\newcommand{\hbo}{$\hfill\Diamond$}

\begin{document}
\title{Geometry and classifications of some $\upomega$-Lie algebras} 
\def\shorttitle{Geometry and classifications of some $\upomega$-Lie algebras}

\author{Yin Chen}
\address{School of Mathematics and Physics, Key Laboratory of ECOFM of 
Jiangxi Education Institute, Jinggangshan University,
Ji'an 343009, Jiangxi, China \& Department of Finance and Management Science, University of Saskatchewan, Saskatoon, SK, Canada, S7N 5A7}
\email{yin.chen@usask.ca}

\author{Shan Ren}
\address{School of Mathematics and Statistics, Northeast Normal University, Changchun 130024, China}
\email{rens734@nenu.edu.cn}

\author{Runxuan Zhang}
\address{Department of Mathematics and Information Technology, Concordia University of Edmonton, Edmonton, AB, Canada, T5B 4E4}
\email{runxuan.zhang@concordia.ab.ca}

\begin{abstract}
Using group actions and orbit–stabilizer methods, we study the geometry of isomorphism classes of finite-dimensional $\upomega$-Lie algebras over a field $\mathbb{K}$ of characteristic $\neq 2$ and establish a one-to-one correspondence between the set of isomorphism classes and the orbit space of a stabilizer of $\upomega$. We also apply techniques from computational ideal theory to explore the geometric structure of the affine variety of all 3-dimensional $\upomega$-Lie algebras over $\mathbb{K}$, showing that this variety is a 6-dimensional irreducible affine variety and a complete intersection.  As an application, we derive a complete classification of all 3-dimensional $\upomega$-Lie algebras over an algebraically closed field of characteristic $\neq 2$, up to $\upomega$-Lie algebra isomorphism. 
\end{abstract}

\date{\today}
\thanks{2020 \emph{Mathematics Subject Classification}. 17A30; 17D99; 13P25.}
\keywords{$\upomega$-Lie algebras; group actions; Gr\"{o}bner bases; classifications.}
\maketitle \baselineskip=16.5pt

\dottedcontents{section}[1.16cm]{}{1.8em}{5pt}
\dottedcontents{subsection}[2.00cm]{}{2.7em}{5pt}

\section{Introduction}
\setcounter{equation}{0}
\renewcommand{\theequation}
{1.\arabic{equation}}
\setcounter{theorem}{0}
\renewcommand{\thetheorem}
{1.\arabic{theorem}}

\noindent Let $\K$ be any field of characteristic $\neq 2$ and $(L,[-,-])$ be a finite-dimensional skew-symmetric nonassociative algebra over $\K$. Let $\upomega:L\oplus L\ra \K$ be a bilinear form on $L$. The triple $(L,[-,-],\upomega)$ is called an \textit{$\upomega$-Lie algebra} over $\K$ if 
\begin{equation}\tag{$\upomega$-Jacobi identity}\label{Jacobi}
[[x,y],z]+[[y,z],x]+[[z,x],y]=\upomega(x,y)z+\upomega(y,z)x+\upomega(z,x)y
\end{equation}
for all $x,y,z\in L$. The notion of $\upomega$-Lie algebras, 
introduced in \cite{Nur07} as a natural generalization of Lie algebras ($\upomega=0$), was originally motivated by the study of symmetric 3-tensors on real vector spaces endowed with a Riemannian metric structure that arise in the context of isoparametric hypersurfaces in spheres as explored in \cite{BN07}. 
Our primary objective is to develop a method, in term of group actions and orbit-stabilizers, to 
understand the geometry of the isomorphism classes of finite-dimensional $\upomega$-Lie algebras over $\K$.

In the past twenty years, algebraic structures and classifications of finite-dimensional $\upomega$-Lie algebras over a field of characteristic zero have been studied extensively; see for example, \cite{CNY23, CZ17, CRSZ26, Oub24, Oub25} and \cite{Zha21}. Among these studies, low-dimensional $\upomega$-Lie algebras play an irreplaceable role in understanding  representations, extensions, derivations, and automorphisms of $\upomega$-Lie algebras.
For example, it was proved in \cite[Section 7]{CZ17} that all non-Lie finite-dimensional simple complex $\upomega$-Lie algebras must be $3$-dimensional.  Note that $3$-dimensional real $\upomega$-Lie algebras were classified by
\cite[Theorem 2.1]{Nur07} as well as the first but relatively rough classification of all $3$-dimensional complex 
$\upomega$-Lie algebras appeared in \cite[Theorem 2]{CLZ14}. Recently, the study of related $\upomega$-algebra structures have also became popular in the area of nonassociative algebras; see \cite{CW23, SHZC25, Zho25} and \cite{CNY26}. 
One goal of this article is to explore the geometric structure of the affine variety of all 3-dimensional non-Lie $\upomega$-Lie 
algebras over $\K$, and particularly, to give a complete classification of these algebras up to $\upomega$-algebra isomorphism, when $\K$ is algebraically closed.

Note that an $\upomega$-Lie algebra consists of three components: the underlying vector space $L$, the bracket product $[-,-]$, and the bilinear form $\upomega$. Thus when $L$ is fixed, the algebraic structure of an $\upomega$-Lie algebra will be determined by  $[-,-]$ and $\upomega$. In the low-dimensional classifications (when $\K=\C$ and $\R$) presented in \cite{CLZ14, CZ17}, our approach consistently involved first determining the bracket product, followed by the specification of the bilinear form $\upomega$. In contrast to our previous methods, the present article carefully examine  relationships between the bracket  $[-,-]$ and the form $\upomega$, taking a strategy that begins with the determination of $\upomega$. Moreover, we use the language of group actions and orbit-stabilizers to understand the geometry of the isomorphism classes of $\upomega$-Lie algebras of a fixed dimension over $\K$.
The following theorem is the first main result in our article. 

\begin{thm}\label{thm1}
Let $\upomega$ be a nonzero skew-symmetric bilinear form on an $n$-dimensional vector space $V$ over a field $\K$ of characteristic $\neq 2$ and $\LL_\upomega(\K)$ be the affine variety of all $\upomega$-Lie algebras on $V$ with respect to $\upomega$. 
Suppose $G_\upomega$ denotes the stabilizer of $\upomega$ in $\GL(V)$. Then there exists a one-to-one 
correspondence between the set of isomorphism classes of $\LL_\upomega(\K)$ and the orbit space of 
$\LL_\upomega(\K)$ under an action of $G_\upomega$.
\end{thm}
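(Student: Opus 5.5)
We first fix the framework, then construct the action of $G_\upomega$, and finally check that orbits coincide with isomorphism classes. Write $\mathcal{B}(V)$ for the space of skew-symmetric bilinear products $\mu: V\times V\to V$. The group $\GL(V)$ acts on $\mathcal{B}(V)$ by transport of structure, $(g\cdot\mu)(x,y)=g\,\mu(g^{-1}x,g^{-1}y)$, and on skew forms by $(g\cdot\upomega)(x,y)=\upomega(g^{-1}x,g^{-1}y)$. The variety $\LL_\upomega(\K)$ consists of those $\mu\in\mathcal{B}(V)$ for which $(V,\mu,\upomega)$ satisfies the $\upomega$-Jacobi identity; this is a polynomial condition in the structure constants, so $\LL_\upomega(\K)$ is indeed affine. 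Two such triples $(V,\mu,\upomega)$ and $(V,\mu',\upomega)$ are isomorphic as $\upomega$-Lie algebras when there is a linear bijection $g$ with $g\mu(x,y)=\mu'(gx,gy)$ and $\upomega(x,y)=\upomega(gx,gy)$. In other words, $\mu'=g\cdot\mu$ and $g\cdot\upomega=\upomega$.

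The first step is to show that $G_\upomega$ preserves $\LL_\upomega(\K)$. Take $\mu\in\LL_\upomega(\K)$ and $g\in G_\upomega$, and put $x=gx'$, $y=gy'$, $z=gz'$. The Jacobiator of $g\cdot\mu$ evaluated at $(x,y,z)$ equals $g$ applied to the Jacobiator of $\mu$ at $(x',y',z')$. By hypothesis this is
\[
g\bigl(\upomega(x',y')z'+\upomega(y',z')x'+\upomega(z',x')y'\bigr).
\]
Since $\upomega(x',y')=\upomega(x,y)$ for $g\in G_\upomega$, this becomes $\upomega(x,y)z+\upomega(y,z)x+\upomega(z,x)y$, which is the $\upomega$-Jacobi identity for $g\cdot\mu$. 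So $g\cdot\mu\in\LL_\upomega(\K)$. The action axioms hold because they already hold for the $\GL(V)$-action.

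The second step is the bijection itself. By the description of isomorphism above, $\mu\cong\mu'$ exactly when $\mu'=g\cdot\mu$ for some $g\in G_\upomega$, so isomorphism classes are precisely the $G_\upomega$-orbits. The map sending a class $[\mu]$ to the orbit $G_\upomega\cdot\mu$ is then well defined and injective, and it is surjective by definition.

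The main obstacle is conceptual rather than computational: we must decide which notion of isomorphism is intended. If isomorphism is taken to mean only that $g$ intertwines the brackets, we need a lemma showing that the bracket determines $\upomega$ when $\upomega\ne0$. This is where the hypothesis $\upomega\neq 0$ enters. Suppose $\mu$ is an $\upomega$-Lie bracket and also an $\upomega'$-Lie bracket. Subtracting the two identities gives
\[
\eta(x,y)z+\eta(y,z)x+\eta(z,x)y=0,\qquad \eta=\upomega-\upomega'.
\]
If $\dim V\ge3$, choose $z$ outside the span of $x,y$ to force $\eta(x,y)=0$ for independent $x,y$; for dependent $x,y$ we have $\eta(x,y)=0$ by skew-symmetry, so $\eta=0$. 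If $\dim V=2$, the left side is alternating and trilinear on a $2$-dimensional space, hence vanishes identically and gives no constraint. In that case we would fall back on the strict notion of isomorphism that requires $g$ to preserve $\upomega$, which is the one built into the statement via $G_\upomega$.

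With uniqueness of $\upomega$ in hand for $n\ge3$, a bracket isomorphism $g$ from $\mu$ to $\mu'$ (both in $\LL_\upomega(\K)$) automatically satisfies $g\cdot\upomega=\upomega$. Indeed, $g\cdot\mu=\mu'$ is a $(g\cdot\upomega)$-Lie bracket and also an $\upomega$-Lie bracket, so uniqueness forces $g\cdot\upomega=\upomega$, i.e. $g\in G_\upomega$. This completes the correspondence.
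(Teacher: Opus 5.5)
Your proposal is correct and follows essentially the same route as the paper. The paper also uses the transport-of-structure action $[x,y]_g=g[g^{-1}(x),g^{-1}(y)]$ of $G_\upomega$, checks that it preserves $\LL_\upomega(\K)$, and, since its definition of isomorphism already requires $\upomega(x,y)=\upomega(g(x),g(y))$, identifies isomorphism classes with $G_\upomega$-orbits. Your extra discussion of bracket-only isomorphisms parallels the paper's Section 2 proposition that a fixed bracket admits at most one compatible $\upomega$, and it is not needed for the theorem as stated. Note that this uniqueness argument relies on $\dim V\geq 3$, not on $\upomega\neq 0$ as you suggest.
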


The geometric structure of the variety of finite-dimensional Lie algebras has been a classical topic in Lie theory and differential geometry, closely related to the theory of  contractions, deformations, and cohomology of Lie algebras; see for example, \cite[Chapter 5]{GK96}. As the first step of exploring the geometric structure of finite-dimensional $\upomega$-Lie algebras, we are interested in the affine variety  $\A_3(\K)$ of all 3-dimensional non-Lie $\upomega$-Lie algebras over $\K$.  The second purpose of this article is to use techniques from computational ideal theory to explore the geometric structure of $\A_3(\K)$.  Our method and its analogue have been successfully applied to the study of other nonassociative algebras (see  \cite{CCZ21, CZ23a, CRSZ25} and \cite{CQRZ26}) and have inspired numerous subsequent research; see \cite{DW24, Che25, DS25}, and \cite{CZ26}.
We state the second main result as follows. 

\begin{thm}\label{thm2}
Let $\upomega$ be a nonzero skew-symmetric bilinear form on a $3$-dimensional vector space $V$ over a field $\K$ of characteristic $\neq 2$ and $\A_3(\K)$ be the variety of all $\upomega$-Lie algebras on $V$. Then $\A_3(\K)$ is a $6$-dimensional irreducible affine variety and the corresponding vanishing ideal is a complete intersection. 
\end{thm}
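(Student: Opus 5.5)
Throughout, fix a basis adapted to $\upomega$ and reduce the $\upomega$-Jacobi identity to explicit polynomial equations. Since $\upomega\neq 0$ is skew-symmetric on a $3$-dimensional space, it has rank $2$. So we may choose a basis $e_1,e_2,e_3$ of $V$ with $\upomega(e_1,e_2)=1$ and $\upomega(e_1,e_3)=\upomega(e_2,e_3)=0$. A skew-symmetric bracket on $V$ is determined by $9$ coordinates:
\[
[e_1,e_2]=\textstyle\sum_i a_ie_i,\qquad [e_2,e_3]=\sum_i b_ie_i,\qquad [e_3,e_1]=\sum_i c_ie_i .
\]
Both sides of the $\upomega$-Jacobi identity are alternating trilinear in $(x,y,z)$: the left side is a cyclic sum of a skew bracket, and the right side is visibly alternating. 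Hence in dimension $3$ the identity is equivalent to the single case $(x,y,z)=(e_1,e_2,e_3)$. Writing $a=[e_1,e_2]$, $b=[e_2,e_3]$, $c=[e_3,e_1]$, a direct expansion gives the vector equation
\[
(c_1-b_2)\,a+(a_2-c_3)\,b+(b_3-a_1)\,c=e_3 .
\]
This is three quadratic equations $f_1,f_2,f_3$ in $\K[a_i,b_i,c_i]$. So $\A_3(\K)=V(f_1,f_2,f_3)\subseteq\K^9$, and every irreducible component has dimension at least $9-3=6$ by Krull's principal ideal theorem.

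Next I would prove that $I=\ideal{f_1,f_2,f_3}$ has height exactly $3$, which makes $f_1,f_2,f_3$ a regular sequence and $I$ a complete intersection. I would compute a Gr\"obner basis of $I$ with respect to a suitable monomial order, possibly after a linear change of the variables $a_1,b_3,\dots$ to simplify the linear factors $c_1-b_2$, $a_2-c_3$, $b_3-a_1$. The aim is to find an order in which the leading monomials $\LM(f_1),\LM(f_2),\LM(f_3)$ are pairwise coprime. Then the $f_i$ themselves form a Gr\"obner basis. Moreover, $\LM$'s forming a regular sequence implies the $f_i$ form one, so $\dim\A_3=6$ and $\K[\A_3]$ is Cohen--Macaulay. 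Since the inhomogeneity only comes from the constant $e_3$, I could also argue via the homogenization (or the associated graded ideal) if a direct choice of order fails.

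The main obstacle is irreducibility, i.e.\ primeness of $I$ (working over $\overline{\K}$). Because $I$ is a complete intersection, it is unmixed: all components have dimension $6$. Thus it suffices to exhibit a dense open piece that is irreducible and show its complement is too small to contain a component. Concretely, I would localize at a coordinate such as $c_1-b_2$ (or at a $2\times2$ minor of the Jacobian). On that open set, one equation can be solved for a variable, e.g.\ expressing $a$ in terms of the others, which exhibits the open piece as the graph of a rational map over an irreducible open subset of an affine space or a hypersurface. I would then check that the closed locus where this localizing element vanishes, intersected with $\A_3$, has dimension $\le 5$ by a small case analysis. Unmixedness then forces $\A_3$ to be the closure of the irreducible open piece.

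For reducedness, I would use the Jacobian criterion to show that the singular locus has codimension $\ge 1$ in each component. Cohen--Macaulay together with generically reduced ($R_0+S_1$) gives that $I$ is radical, so $I$ is the vanishing ideal. Combining these steps yields that $\A_3(\K)$ is a $6$-dimensional irreducible affine variety whose vanishing ideal is the complete intersection $\ideal{f_1,f_2,f_3}$.
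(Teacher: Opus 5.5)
Your proposal is correct, but it reverses the paper's logic. The paper uses grevlex with $x_1>\cdots>z_3$. There the leading monomials of $f_1,f_2,f_3$ are not pairwise coprime, so it must:
\begin{enumerate}
\item complete to a five-element Gr\"obner basis;
\item prove $\ideal{f_1}$ and $\ideal{f_1,f_2}$ prime via a colon ideal;
\item certify, with a $26$-element Gr\"obner basis, that $\det(M)$ is a non-zero-divisor modulo $\p$;
\item obtain primality from $(R/\p)[\det(M)^{-1}]$ being a localized polynomial ring, and the height from a chain of primes;
\item only at the end, derive the regular sequence from the domains $R/\ideal{f_1}$ and $R/\ideal{f_1,f_2}$.
\end{enumerate}

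You get the complete intersection first. You then deduce irreducibility from equidimensionality (Krull) plus a dimension count on the complement of one chart, which replaces the machine-checked non-zero-divisor step.

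The ingredients you leave unspecified do exist.
\begin{itemize}
\item \emph{Coprime leading monomials.} Take weights $(0,1,2)$, $(2,0,1)$, $(1,2,0)$ on $(a_1,a_2,a_3)$, $(b_1,b_2,b_3)$, $(c_1,c_2,c_3)$, refined by any monomial order. Your three equations then have pairwise coprime leading monomials $a_2b_1$, $b_3c_2$, $a_3c_1$. Equivalently, with weights $(0,1,2)$, $(1,2,0)$, $(2,0,1)$ on the $x_i,y_i,z_i$, the paper's own $f_1,f_2,f_3$ are already a Gr\"obner basis.
\item \emph{The chart $\lambda\neq 0$.} Use coordinates $\lambda=c_1-b_2$, $\mu=a_2-c_3$, $\nu=b_3-a_1$ in place of $c_1,a_2,b_3$. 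The equations become $\lambda a_1+\mu b_1+\nu(\lambda+b_2)=0$, $\lambda(\mu+c_3)+\mu b_2+\nu c_2=0$ and $\lambda a_3+\mu(\nu+a_1)+\nu c_3=1$. You must solve all three, for $a_1,c_3,a_3$, not just one equation. This gives $\A_3\cap\{\lambda\neq 0\}\cong\K^\times\times\K^5$.
\item \emph{The complement.} On $\lambda=0$ the last equation forces $(\mu,\nu)\neq(0,0)$, and a two-case count gives dimension $5$.
\item \emph{Radicality.} The initial ideal $\ideal{a_2b_1,b_3c_2,a_3c_1}$ is squarefree, so $I$ is radical at once. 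The Jacobian/Serre step can therefore be dropped.
\end{itemize}

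One imprecision: height $3$ alone does not make a given three-element generating set a regular sequence in a non-local ring. Your coprime-leading-monomial argument does give the regular sequence, so nothing is lost.

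As for what each route buys: yours is shorter and checkable by hand. The paper's keeps everything as explicit Gr\"obner certificates over $\K$ itself, with no passage to $\overline{\K}$, and needs neither the unmixedness theorem nor Serre's criterion.
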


\noindent More interestingly, we observe that this variety $\A_3(\K)$ is both a geometric complete intersection and an algebraic complete intersection; see Remark \ref{rem3.8}.

As an application of Theorems \ref{thm1} and \ref{thm2}, the third main result of this article completely classifies 
all 3-dimensional non-Lie $\upomega$-Lie algebras over an algebraically closed field of characteristic $\neq 2$, up to $\upomega$-Lie algebra isomorphism. 

\begin{thm}\label{thm3}
Let $\K$ be an algebraically closed field of characteristic $\neq 2$ and $L\in \A_3(\K)$ be a $3$-dimensional non-Lie $\upomega$-Lie algebra over $\K$. Then
\begin{enumerate}
  \item $L$ must be isomorphic to one of $\{A, B, C_\upalpha,D\mid \upalpha\in\K^\times\}$ that appears in Section {\rm \ref{sec4};}
  \item All these $\upomega$-Lie algebras {\rm (}\ref{A}{\rm)},{\rm (}\ref{B}{\rm)},{\rm (}\ref{C}{\rm)} and  {\rm (}\ref{D}{\rm)}  are mutually non-isomorphic.
\end{enumerate}
\end{thm}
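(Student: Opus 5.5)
By Theorem \ref{thm1}, the classification reduces to describing the orbits of the stabilizer $G_\upomega$ on $\A_3(\K)=\LL_\upomega(\K)$ for one fixed nonzero skew form $\upomega$. Any two nonzero skew forms on a $3$-dimensional space are $\GL(V)$-equivalent, since each has rank $2$. So I fix a basis $e_1,e_2,e_3$ with $\upomega(e_1,e_2)=1$ and $e_3$ spanning the radical of $\upomega$.

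\textbf{Step 1: the stabilizer.} In this basis $G_\upomega$ consists of the $g$ with $g(e_3)\in\K^\times e_3$ such that the map induced by $g$ on $V/\K e_3$ has determinant $1$. It is the semidirect product of $\SL_2(\K)$, a scaling of $e_3$, and the translations $e_i\mapsto e_i+t_ie_3$ for $i=1,2$.

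\textbf{Step 2: linearize the identity.} In dimension $3$ I encode a skew bracket by a $3\times 3$ matrix $M$ via $[x,y]=M(x\times y)$, where $\times$ is the cross product in the chosen basis. The $9$ structure constants are the entries of $M$. I then split $M=S+[a]_\times$ with $S$ symmetric and $a\in\K^3$.

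\begin{itemize}
\item Both sides of the $\upomega$-Jacobi identity are alternating trilinear in $x,y,z$. Hence the identity is equivalent to its single instance $(e_1,e_2,e_3)$, which says the Jacobi sum equals $\upomega(e_1,e_2)e_3=e_3$.
\item A direct computation should turn this into $Sa=\lambda e_3$ for a fixed nonzero constant $\lambda$, up to a factor $2$ or a sign that I will pin down.
\item This recovers the three defining equations used in Theorem \ref{thm2}.
\item It forces $a\neq 0$, and it forces $a\notin\K e_3^{\perp}$ in a suitable sense, because $Sa$ must have nonzero third coordinate.
\end{itemize}

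\textbf{Step 3: the action on $(S,a)$.} Next I compute how $g\in G_\upomega$ acts on the data $(S,a)$. It acts by a twisted congruence on $M$, of the shape $M\mapsto \det(g)^{-1}\,gMg^{T}$ possibly composed with a transpose, so that $S$ transforms as a quadratic form and $a$ as a vector up to a determinant factor. I then normalize in order.

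\begin{itemize}
\item Use $G_\upomega$ to move $a$ to a standard vector. Here the $\SL_2$ part acts transitively on nonzero vectors of the plane, and the translations together with the $e_3$-scaling adjust the remaining coordinates.
\item Pass to the stabilizer of that vector, and note that the condition $Sa=\lambda e_3$ fixes one row of $S$.
\item Split into cases on the rank of the remaining free $2\times 2$ block of $S$, and on whether its induced quadratic form is nondegenerate, degenerate nonzero, or zero. Algebraic closedness lets me diagonalize and take square roots when normalizing.
\item I expect the generic branch to leave one continuous invariant, giving the family $C_\upalpha$. The degenerate branches should give $A$, $B$ and $D$.
\end{itemize}

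For each branch I write out the explicit brackets and check they match (\ref{A})--(\ref{D}). This proves part (1).

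\textbf{Step 4: non-isomorphism.} For part (2) I use isomorphism invariants of $\upomega$-Lie algebras. Candidates are $\dim[L,L]$, the dimension of the center, whether the radical of $\upomega$ is contained in $[L,L]$ or in the center, and the rank of $S$. For the family $C_\upalpha$, I would compute the stabilizer of the normal form inside $G_\upomega$ and show that $\upalpha$ is a genuine invariant. The natural candidate is a ratio of eigenvalues of $S$ restricted to a canonically defined plane. I need to check whether some element could send $\upalpha\mapsto\upalpha^{-1}$ or $\upalpha\mapsto-\upalpha$; if so, the statement requires that the normal form already excludes it.

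I expect the main obstacle to be this last point together with the case split in Step 3. The delicate part is getting the twisted action exactly right, including the determinant factor and the role of the translations in $G_\upomega$, so that no two listed normal forms are secretly conjugate and no orbit is missed.
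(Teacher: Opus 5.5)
\textbf{Summary.} Your linearization in Step~2 is correct, but Step~3 misses a whole orbit, and Step~4 cannot succeed because part~(2) is false as stated.

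\textbf{Steps 2 and 3.} The constants in Step~2 can be pinned down. Since $\sum_i Me_i\times e_i=-2a$, the Jacobi sum at $(e_1,e_2,e_3)$ is $M(-2a)=-2Sa$, so the identity is exactly $Sa=-\tfrac12 e_3$. The action is $M\mapsto \det(g)^{-1}gMg^{T}$ with no transpose. Hence $S\mapsto\det(g)^{-1}gSg^{T}$ and $a\mapsto g^{-T}a$, so $a$ is a covector; indeed $\trace\ad_v=-2\,a\cdot v$. This Bianchi-type reformulation is a genuinely different and cleaner route than the paper's direct normalization of structure constants.

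Step~3 fails as planned. The relation $Sa=-\tfrac12e_3$ does \emph{not} force $a_3\neq0$; it only forces $(Se_3)\cdot a\neq 0$. Since $g^{-T}$ preserves $\{a_3=0\}$, there are two orbits of $a$.

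If $a_3\neq0$, you can normalize $a=-\tfrac12e_3$, so $S=\dia\{S',1\}$. The stabilizer is $\{\dia\{S_0,1\}:S_0\in\SL_2(\K)\}$, acting by $S'\mapsto S_0S'S_0^{T}$. The branches are $\det S'\neq0$ ($C_\upalpha$), $\operatorname{rank}S'=1$ ($B$) and $S'=0$ ($C_{-1/2}$).

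The algebras $A$ and $D$ have $\trace\ad_z=0$, so they cannot arise from a degenerate $S'$ in this branch. They make up the second orbit $a_3=0$. There you normalize $a=e_1$, and $Se_1=-\tfrac12e_3$ fixes the first column of $S=(s_{ij})$. The stabilizer (first row of $g$ equal to $e_1^{T}$) kills $s_{23},s_{33}$ and rescales $s_{22}$. This leaves $s_{22}=0$ ($D$) or $s_{22}=1$ ($A$).

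\textbf{Step 4.} The fatal problem is that part (2) is false, so $\upalpha$ cannot be shown to be an invariant. In your own framework, the only invariant of a nondegenerate $S'$ under $\SL_2$-congruence is $\det S'$. For $C_\upalpha$ one finds $S'=-(\upalpha+\tfrac12)\begin{pmatrix}0&1\\1&0\end{pmatrix}$, so $\det S'=-(\upalpha+\tfrac12)^2$.

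Concretely, the map $x\mapsto y$, $y\mapsto -x$, $z\mapsto z$ lies in $G_\upomega$ and is an isomorphism $C_\upalpha\to C_{-(\upalpha+1)}$. Indeed, in $C_{-(\upalpha+1)}$ one has $[y,-x]=z$, $[y,z]=\upalpha y$ and $[-x,z]=(\upalpha+1)x$. For every $\upalpha\in\K^\times\setminus\{-1,-\tfrac12\}$, the value $-(\upalpha+1)$ is a different element of $\K^\times$. So the involution you worried about exists: it is $\upalpha\mapsto-(\upalpha+1)$, not $\upalpha^{-1}$ or $-\upalpha$, and the normal form $\upalpha\in\K^\times$ does not exclude it.

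The paper's argument for this step reaches the opposite conclusion only through misprints: it uses $[y,z]=-(\upalpha+1)z$ and $h^{-1}(x)=s_3x+s_4y$. With the correct data one gets $s_1(\upalpha-\upbeta)=0$ and $s_2(\upalpha+1+\upbeta)=0$. These are solvable with $s_1=0$ when $\upbeta=-(\upalpha+1)$. This agrees with the remark in the proof of Lemma~\ref{lem4.5} that $\dia\{b,c\}$ and $\dia\{c,b\}$ are $\SL_2(\K)$-conjugate.

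\textbf{What your approach does prove.} Part (1) holds. The listed algebras are pairwise non-isomorphic, except that $C_\upalpha\cong C_\upbeta$ if and only if $\upbeta\in\{\upalpha,-(\upalpha+1)\}$. The separating invariants are:
\begin{enumerate}
  \item whether $a_3=-\tfrac12\trace\ad_z$ vanishes;
  \item then $\operatorname{rank}S'$ and $\det S'$ in the branch $a_3\neq 0$;
  \item and $\operatorname{rank}M=\dim[L,L]$ to separate $A$ from $D$.
\end{enumerate}
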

\noindent Note that our previous work \cite{CLZ14} used linear algebra and representation theory methods to give a rough classification of
3-dimensional non-Lie $\upomega$-Lie algebras over $\C$, but we did not examine when two such algebras are isomorphic of $\upomega$-Lie algebras. Here we will use a different method from group actions and orbit-stabilizers to prove Theorem \ref{thm3}, which can also  be viewed as a generalization and an enhancement of \cite[Theorem 2]{CLZ14}. 

We organize this article as follows. In Section \ref{sec2}, we work on a general $n$-dimensional $\upomega$-Lie algebras $(n\geqslant 3)$ over a field $\K$ of characteristic $\neq 2$ and explore relationships between bracket products and bilinear forms in $\upomega$-Lie algebras, showing that two isomorphic $\upomega$-Lie algebras have equivalent bilinear forms; see Proposition \ref{prop2.4}. This result motivates us to use the language of group actions and orbit-stabilizers to develop a procedure for classifying all $n$-dimensional non-Lie $\upomega$-Lie algebras over $\K$. This section also contains a proof of Theorem \ref{thm1}.  Section \ref{sec3}
explores the geometric structure of the affine variety of $\A_3(\K)$, and in particular provides a proof of Theorem \ref{thm2}. In this section, all $\upomega$-Lie algebras are 3-dimensional and non-Lie over $\K$. Moreover, our proofs heavily rely on the theory of Gr\"{o}bner bases, and some notations, such as $\LM(\ell)$ and $\LC(\ell)$ denoting the leading monomial and the leading coefficient of a polynomial $\ell$, respectively, are standard in computational ideal theory; see for example, \cite[Chapter 1]{DK15}. In Section \ref{sec4}, we apply Theorems \ref{thm1} and \ref{thm2} to obtain a complete classification of all 
3-dimensional non-Lie $\upomega$-Lie algebras over an algebraically close field of characteristic $\neq 2$. This section consists of four subsections: the first three  are devoted to finding all such $\upomega$-Lie algebras, while the last  completes the proof of Theorem \ref{thm3}.  Section \ref{sec5} contains two remarks illustrating that our method can be used to construct new $\upomega$-Lie algebras over any fields of characteristic $\neq 2$. This also suggests that our approach might have potential applications in classifying finite-dimensional $\upomega$-Lie algebras over the rational field or finite fields.

Throughout this article, $\C$ and $\R$ denote the field of complex and real numbers, respectively.  We always assume that $\K$ denotes an arbitrary field of characteristic $\neq 2$, except in Section \ref{sec4}, where $\K$ is required to be algebraically closed. All vector spaces and algebras are finite-dimensional and over $\K$. The symbol $\ideal{\ast}$ denotes the ideal generated by $\ast$ or the subspace spanned by $\ast$.  We also write ``grevlex'' for the graded reverse lexicographical monomial ordering on a polynomial ring over $\K$ and write $\K^\times$ for the set of nonzero elements of $\K$.

\section{Varieties of $\upomega$-Lie Algebras}\label{sec2}
\setcounter{equation}{0}
\renewcommand{\theequation}
{2.\arabic{equation}}
\setcounter{theorem}{0}
\renewcommand{\thetheorem}
{2.\arabic{theorem}}

\noindent This section first explores some fundamental properties and relations between bracket products and bilinear forms in $\upomega$-Lie algebras. We also provide a method, using the viewpoint of group action and orbit-stabilizer  theory, to classify all $\upomega$-Lie algebras of a fixed dimension, up to isomorphism of $\upomega$-Lie algebras.

Note that there are no non-Lie $\upomega$-Lie algebras in dimensions 1 or 2 (see \cite[Section 1]{Nur07}). Thus we assume that $\dim_\K(L)\geqslant 3$ in this section. Let us begin with the following simple fact. 

\begin{prop}\label{prop2.1}
If $(L,[-,-],\upomega)$ is an $\upomega$-Lie algebra over $\K$, then $\upomega$ is skew-symmetric. 
\end{prop}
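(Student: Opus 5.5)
The plan is to substitute a pair with a repeated argument into the \ref{Jacobi} identity and use the skew-symmetry of the bracket. Since the characteristic is $\neq 2$, skew-symmetry of $[-,-]$ gives $[x,x]=0$ for all $x\in L$ (from $[x,x]=-[x,x]$).

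\emph{Step 1: specialize the identity.} Set $y=x$ in the identity. On the left-hand side, $[[x,x],z]=0$, and
\[
[[x,z],x]+[[z,x],x]=[[x,z],x]-[[x,z],x]=0 .
\]
So the left-hand side vanishes identically. The right-hand side becomes
\[
\upomega(x,x)z+\upomega(x,z)x+\upomega(z,x)x=\upomega(x,x)z+\bigl(\upomega(x,z)+\upomega(z,x)\bigr)x .
\]
Hence, for all $x,z\in L$,
\[
\upomega(x,x)\,z+\bigl(\upomega(x,z)+\upomega(z,x)\bigr)\,x=0 .
\]

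\emph{Step 2: show $\upomega(x,x)=0$.} Fix $x\neq 0$. Because $\dim_\K L\geqslant 3$ (in fact $\geqslant 2$ suffices), we can choose $z$ linearly independent from $x$. Linear independence then forces both coefficients to vanish; in particular $\upomega(x,x)=0$. For $x=0$ this holds trivially.

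\emph{Step 3: conclude skew-symmetry.} Since $\upomega(x,x)=0$ for every $x$, polarizing gives
\[
0=\upomega(x+z,x+z)=\upomega(x,z)+\upomega(z,x),
\]
so $\upomega$ is alternating and therefore skew-symmetric. Alternatively, Step 2 already yields $\upomega(x,z)+\upomega(z,x)=0$ whenever $x,z$ are independent, and the dependent case follows from $\upomega(x,x)=0$.

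There is no real obstacle here. The only points needing care are the use of $\cha\K\neq 2$ to get $[x,x]=0$, and the existence of a vector $z$ independent of $x$, which is guaranteed by the standing assumption $\dim L\geqslant 3$.
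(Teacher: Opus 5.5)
Your proof is correct and is essentially the paper's argument. The paper sets $z=y$ rather than $y=x$, uses a vector linearly independent of the repeated one to force $\upomega(y,y)=0$, and then polarizes $\upomega(y_1+y_2,y_1+y_2)=0$ exactly as you do.
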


\begin{proof}
For any element $y\in L$, since $(L,[-,-])$ is skew-symmetric and $\cha(\K)\neq 2$, it follows that $[y,y]=0$.
Setting $z=y$ in (\ref{Jacobi}) obtains that $(\upomega(x,y)+\upomega(y,x))y+\upomega(y,y)x=0$ for all $x\in L$. 
Since $\dim_k(L)\geqslant 3$, we may take an element $x$ such that $\{x,y\}$ are linearly independent over $\K$. Thus
$\upomega(y,y)=0$. Hence, $\upomega(y_1+y_2,y_1+y_2)=0$ for all $y_1,y_2\in L$. Expanding this equation we see that 
$\upomega(y_1,y_2)=-\upomega(y_2,y_1)$, showing that $\upomega$ is skew-symmetric.
\end{proof}

\begin{prop} 
If $\upomega_1$ and $\upomega_2$ are two bilinear forms on $(L,[-,-])$ such that both $(L,[-,-],\upomega_1)$ and $(L,[-,-],\upomega_2)$ are $\upomega$-Lie algebras, then $\upomega_1=\upomega_2$.
\end{prop}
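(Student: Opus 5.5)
Subtracting the two $\upomega$-Jacobi identities for the same bracket $[-,-]$ cancels the bracket terms, since the left-hand side does not depend on the form. Setting $\uptheta=\upomega_1-\upomega_2$, which is bilinear, we obtain
\begin{equation*}
\uptheta(x,y)z+\uptheta(y,z)x+\uptheta(z,x)y=0\quad\text{for all } x,y,z\in L.
\end{equation*}
The plan is to show that this identity forces $\uptheta=0$ once $\dim_\K(L)\geqslant 3$, which is the standing assumption of this section.

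First, by Proposition \ref{prop2.1} both $\upomega_1$ and $\upomega_2$ are skew-symmetric, so $\uptheta$ is skew-symmetric as well. Fix arbitrary $x,y\in L$; we want $\uptheta(x,y)=0$. If $x$ and $y$ are linearly dependent, say $y=\lambda x$ or $x=0$, then $\uptheta(x,y)=\lambda\uptheta(x,x)=0$ by skew-symmetry, since $\cha(\K)\neq 2$.

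If $x$ and $y$ are linearly independent, use $\dim_\K(L)\geqslant 3$ to choose $z$ with $\{x,y,z\}$ linearly independent. The displayed identity is then a linear relation among three independent vectors, so every coefficient vanishes, and in particular the coefficient $\uptheta(x,y)$ of $z$ is zero. Hence $\uptheta=0$, that is, $\upomega_1=\upomega_2$.

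There is no real obstacle here. The only point needing care is that the third vector $z$ exists, which is exactly where $\dim_\K(L)\geqslant 3$ enters. This assumption is essential: for example, with the zero bracket in dimension $2$, every skew-symmetric form satisfies the identity.
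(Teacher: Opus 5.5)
Your proof is correct and is essentially the paper's argument: subtract the two $\upomega$-Jacobi identities and read off the coefficient of a third vector $z$. The paper simply chooses $z\notin\ideal{x,y}$ for arbitrary $x,y$, which already covers linearly dependent pairs. Your case split via skew-symmetry and Proposition~\ref{prop2.1} is therefore harmless but unnecessary.
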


\begin{proof}
Choose any three elements $x,y,z\in L$ such that $z$ doesn't lie in the linear subspace spanned by $\{x,y\}$ and consider the corresponding (\ref{Jacobi}) in $(L,[-,-],\upomega_1)$ and $(L,[-,-],\upomega_2)$:
\begin{eqnarray*}
~[[x,y],z]+[[y,z],x]+[[z,x],y]&=&\upomega_1(x,y)z+\upomega_1(y,z)x+\upomega_1(z,x)y \\
~[[x,y],z]+[[y,z],x]+[[z,x],y]&=&\upomega_2(x,y)z+\upomega_2(y,z)x+\upomega_2(z,x)y.
\end{eqnarray*}
Combining these two equations obtains
$$(\upomega_2(x,y)-\upomega_1(x,y))z=(\upomega_1(y,z)-\upomega_2(y,z))x+(\upomega_1(z,x)-\upomega_2(z,x))y.$$
Since $z$ can not be linearly expressed by $x$ and $y$, it follows that $\upomega_1(x,y)=\upomega_2(x,y)$. Therefore, $\upomega_1=\upomega_2$.
\end{proof}

\begin{rem}{\rm
This result means that if the bracket product $[-,-]$ on a vector space $L$ is fixed, then there exists at most one bilinear form $\upomega$ to make the triple $(L,[-,-],\upomega)$ becomes an $\upomega$-Lie algebra. However, we will see that if we fix a bilinear form $\upomega$ on a vector space $L$, there might be many bracket products $[-,-]$ such that $(L,[-,-],\upomega)$ are  $\upomega$-Lie algebras.
\hbo}\end{rem}

Recall that two skew-symmetric bilinear forms $\upomega_1$ and $\upomega_2$ on a finite-dimensional vector space $W$ over $\K$ are \textit{equivalent} if  there exists an invertible linear map $g:W\ra W$ such that 
\begin{equation}
\label{ }
\upomega_1(x,y)=\upomega_2(g(x),g(y))
\end{equation}
for all $x,y\in W$. This means that $\upomega_1$ and $\upomega_2$ can be transformed into each other by basis change in $W$. In terms of matrix language, if we fix a basis of $W$ and use $A_i$ to denote the resulting skew-symmetric matrices of $\upomega_i$, then 
\begin{equation}
\label{ }
A_2=Q^t\cdot A_1\cdot Q
\end{equation}
for some invertible matrix $Q$, where $Q^t$ denotes the transpose of $Q$. In other words,  $\upomega_1$ and $\upomega_2$ are equivalent if and only if their matrices $A_1$ and $A_2$ are congruent.

We say that two $\upomega$-Lie algebras $(L_1,[-,-]_1,\upomega_1)$ and $(L_2,[-,-]_2,\upomega_2)$ are \textit{isomorphic} if there exists an algebraic isomorphism $g:L_1\ra L_2$ such that $\upomega_1(x,y)=\upomega_2(g(x),g(y))$ for all $x,y\in L_1$; compared with \cite[Definition 1.1]{CZZZ18}. Clearly, we have

\begin{prop} \label{prop2.4}
Suppose that $(L_1,[-,-]_1,\upomega_1)$ and $(L_2,[-,-]_2,\upomega_2)$ are two isomorphic $\upomega$-Lie algebras over $\K$. Then {\rm (1)} $\dim_\K(L_1)=\dim_\K(L_2)${\rm; (2)}  $\upomega_1$ and $\upomega_2$ are equivalent.
\end{prop}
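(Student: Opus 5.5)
The plan is to read both claims straight off the definition of isomorphism of $\upomega$-Lie algebras given just before the statement. By that definition there is an algebraic isomorphism $g:L_1\ra L_2$, i.e.\ a bijective linear map with $g([x,y]_1)=[g(x),g(y)]_2$, which also satisfies $\upomega_1(x,y)=\upomega_2(g(x),g(y))$ for all $x,y\in L_1$. Only the linear-bijection part and this compatibility with the forms will be used; the bracket compatibility plays no role.

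For (1), $g$ is a linear isomorphism of finite-dimensional $\K$-vector spaces, so it sends a basis of $L_1$ to a basis of $L_2$. Hence $\dim_\K(L_1)=\dim_\K(L_2)$.

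For (2), the notion of equivalence was stated for two skew-symmetric forms on a single space $W$, so the forms must first be moved onto a common space. By Proposition \ref{prop2.1}, both $\upomega_1$ and $\upomega_2$ are skew-symmetric; this needs the dimensions to be at least $3$, which holds under the standing assumption of this section. Define the form $\upomega_2'$ on $L_1$ by $\upomega_2'(x,y):=\upomega_2(g(x),g(y))$. The compatibility condition then says exactly that $\upomega_1=\upomega_2'$, so taking $\mathrm{id}_{L_1}$ as the transforming map shows $\upomega_1$ is equivalent to the transported form $\upomega_2'$ on $L_1$.

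In matrix terms, fix bases $\{e_i\}$ of $L_1$ and $\{f_i\}$ of $L_2$, which is possible by (1), and let $A_1,A_2$ be the matrices of $\upomega_1,\upomega_2$ in these bases. If $Q$ is the (invertible) matrix of $g$, then $A_1=Q^tA_2Q$. Identifying $L_1$ and $L_2$ with $\K^n$ via these bases, $A_1$ and $A_2$ are congruent, so $\upomega_1$ and $\upomega_2$ are equivalent.

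There is no genuine obstacle here. The only point needing care is this bookkeeping step: making precise that ``equivalent'' for forms on two different spaces means congruence of their Gram matrices after identifying both spaces with $\K^n$ by a choice of bases. Part (1) is what makes that identification available.
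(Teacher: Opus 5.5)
Your proof is correct and is essentially the paper's approach: the paper states this proposition with ``Clearly, we have'' and gives no proof, since both parts follow directly from the definition of an isomorphism as a linear bijection $g$ with $\upomega_1(x,y)=\upomega_2(g(x),g(y))$. Your extra bookkeeping (transporting $\upomega_2$ to $L_1$, or identifying both spaces with $\K^n$ via bases to get $A_1=Q^tA_2Q$) is a correct and harmless clarification of what ``equivalent'' means for forms on two different spaces.
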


This immediate result, together with Proposition \ref{prop2.1}, allows us to provide the following  procedure to understand the variety $\A_n(\K)$ of all $n$-dimensional $\upomega$-Lie algebras over $\K$ and classify $\upomega$-Lie algebras in $\A_n(\K)$:

\begin{enumerate}
  \item We may first classify all $n \times n$ skew-symmetric matrices over $\K$ up to congruence, which is equivalent to classifying all skew-symmetric bilinear forms of size $n$ over $\K$ up to equivalence. This is a well-established topic in linear algebra.
  \item  Secondly, we need to determine all congruence canonical forms of these skew-symmetric matrices. Note that distinct canonical forms correspond to non-isomorphic $\upomega$-Lie algebras. Thus, if two $\upomega$-Lie algebras over $\K$ correspond to different congruence canonical forms, they cannot be isomorphic over $\K$. 
  \item For each congruence canonical form, we write $\upomega$ for the corresponding bilinear form 
  and  denote by $\LL_\upomega(\K)$ the subvariety of $\A_n(\K)$, consisting of all non-Lie $\upomega$-Lie algebra structures with respect to $\upomega$. We need to analyze the geometric structure of $\LL_\upomega(\K)$.
  \item Fix an $n$-dimensional vector space $V$ over $\K$ and a skew-symmetric bilinear form $\upomega$. To
  distinguish two elements $(V,[-,-],\upomega)$ and $(V,[-,-]',\upomega)$  in  $\LL_\upomega(\K)$, we denote by
  $G_\upomega$ the stabilizer of $\upomega$ in $\GL(V)$, i.e.,
  \begin{equation}
\label{ }
G_\upomega:=\{g\in\GL(V)\mid \upomega(g(v),g(w))=\upomega(v,w),\textrm{ for all }v,w\in V\}.
\end{equation}
We will see below that the group  $G_\upomega$ can act on $\LL_\upomega(\K)$, and it is very helpful in classifying 
$\upomega$-Lie algebra structures in  $\LL_\upomega(\K)$.
\end{enumerate}

To realize the first step of the above procedure, we present the following well-known result (Theorem \ref{thm2.5}) on the congruence classification of $n \times n$ skew-symmetric matrices over $\K$. For the reader's convenience, we include a proof of this result, because we were unable to find a suitable reference that provides a detailed proof.

We define
\begin{equation}
\label{eq2.4}
J:=\begin{pmatrix}
  0    & 1   \\
   -1   & 0 
\end{pmatrix}.
\end{equation}

\begin{thm} \label{thm2.5}
Let $A$ be an $n \times n$ skew-symmetric matrix of rank $m$ over $\K$. Then $A$ is congruent to
\begin{equation}
\label{eq2.5}
J_m:=\dia\{J,\dots,J,0,\dots,0\}
\end{equation}
where the number of $J$ is $\frac{m}{2}$ and the number of zero is $n-m$.
\end{thm}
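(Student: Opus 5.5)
We prove Theorem \ref{thm2.5} by induction on $n$, working with the skew-symmetric bilinear form $\upomega(x,y)=x^tAy$ on $W=\K^n$. Finding $Q$ with $Q^tAQ=J_m$ is the same as finding a basis of $W$ in which the Gram matrix of $\upomega$ is $J_m$. First we record that skew-symmetry and $\cha(\K)\neq 2$ give $\upomega(x,x)=0$ for all $x$, since $\upomega(x,x)=-\upomega(x,x)$. This is the only place the characteristic hypothesis is used. It matters, because it is what makes every vector isotropic and so yields the hyperbolic-plane splitting below.

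If $A=0$ then $m=0$ and there is nothing to prove. Otherwise choose $e_1,f_1$ with $\upomega(e_1,f_1)\neq 0$, and rescale $f_1$ so that $\upomega(e_1,f_1)=1$. Then $e_1,f_1$ are linearly independent: if $f_1=ce_1$ we would get $\upomega(e_1,f_1)=c\,\upomega(e_1,e_1)=0$. Let $U=\ideal{e_1,f_1}$; the Gram matrix of $\upomega|_U$ is exactly $J$. Set $U^\perp=\{w\in W\mid \upomega(w,e_1)=\upomega(w,f_1)=0\}$. The key step is $W=U\oplus U^\perp$. For any $w$, the vector
$$w'=w-\upomega(w,f_1)e_1+\upomega(w,e_1)f_1$$
lies in $U^\perp$; this follows directly from $\upomega(e_1,f_1)=1$ and the isotropy of $e_1,f_1$. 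Moreover $U\cap U^\perp=0$ because $\upomega|_U$ is nondegenerate ($\det J=1$).

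Now $\upomega|_{U^\perp}$ is again skew-symmetric, on a space of dimension $n-2$. By the induction hypothesis, $U^\perp$ has a basis $e_2,f_2,\dots,e_k,f_k,z_1,\dots,z_{n-2k}$ in which the Gram matrix is $\dia\{J,\dots,J,0,\dots,0\}$. Prepending $e_1,f_1$ gives a basis of $W$ in which the Gram matrix is $\dia\{J,\dots,J,0,\dots,0\}$ with $k$ blocks $J$, since $U$ and $U^\perp$ are orthogonal. If $Q$ is the change-of-basis matrix, then $Q^tAQ$ equals this block matrix.

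Finally, congruence preserves rank because $Q$ is invertible. The block matrix has rank $2k$, so $2k=m$ and the number of $J$ blocks is $\frac{m}{2}$, which also shows that $m$ is even. The number of zeros is then $n-m$.

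The only substantive step is the orthogonal decomposition $W=U\oplus U^\perp$, and in particular checking that the explicit projection $w'$ lands in $U^\perp$. Everything else is bookkeeping.
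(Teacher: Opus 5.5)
Your proof is correct and is essentially the paper's argument written without coordinates. The paper also inducts on $n$, splits off a $2\times 2$ block $a\cdot J$, and clears the off-diagonal blocks with $Q=\begin{pmatrix} I_2 & a^{-1}JB\\ 0 & I_{n-2}\end{pmatrix}$; the last $n-2$ columns of this $Q$ are exactly your projections $w'$ of the remaining basis vectors into $U^\perp$ (taking $f_1=a^{-1}e_2$), and your version is slightly tidier in normalizing the block to $J$ directly and in deriving from rank invariance that $m$ is even with $\frac{m}{2}$ blocks, a point the paper treats only briefly.
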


\begin{proof}
We may assume that $A\neq 0$. Note that all diagonals in $A$ are zero (because $A^t=-A$ and $\cha(\K)\neq 2$), thus $n\geqslant 2$. We use induction on $n$ to prove this result. 

Consider the case $n=2$. We may assume that $A=\begin{pmatrix}
      0&a    \\
      -a&0  
\end{pmatrix}$ for some $a\in\K^\times$. Define $Q=\dia\left\{1,a^{-1}\right\}$. Then
$Q^t\cdot A\cdot Q=\begin{pmatrix}
   1   & 0   \\
    0  & a^{-1} 
\end{pmatrix}\begin{pmatrix}
      0&a    \\
      -a&0  
\end{pmatrix} \begin{pmatrix}
   1   & 0   \\
    0  & a^{-1} 
\end{pmatrix}=\begin{pmatrix}
  0    & 1   \\
   -1   & 0 
\end{pmatrix}=J.$ Thus $A$ is congruent to $J$.

Suppose that $n>2$.  Since $A\neq 0$, the base change allows us to write 
$$A=\begin{pmatrix}
    a\cdot J  & B   \\
    -B^t  & D 
\end{pmatrix}$$
for some $a\in\K^\times$ and a skew-symmetric matrix $D$ of size $n-2$. Define $Q=\begin{pmatrix}
   I_2   & a^{-1}\cdot J\cdot B   \\
0  &  I_{n-2}
\end{pmatrix}$. Since $J^t=-J$, we see that
$Q^t=\begin{pmatrix}
   I_2   & 0   \\
   -a^{-1}\cdot B^t\cdot J   &  I_{n-2}
\end{pmatrix}.$
Note that $J^2=-I_2$. Thus
\begin{eqnarray*}
Q^t\cdot A\cdot Q&=&\begin{pmatrix}
   I_2   & 0   \\
   -a^{-1}\cdot B^t\cdot J   &  I_{n-2}
\end{pmatrix} \begin{pmatrix}
    a\cdot J  & B   \\
    -B^t  & D 
\end{pmatrix} \begin{pmatrix}
   I_2   & a^{-1}\cdot J\cdot B   \\
0  &  I_{n-2}
\end{pmatrix}\\
&=&\begin{pmatrix}
      a\cdot J  & B     \\
     0 & D- a^{-1}\cdot B^t\cdot J\cdot B
\end{pmatrix}\begin{pmatrix}
   I_2   & a^{-1}\cdot J\cdot B   \\
0  &  I_{n-2}
\end{pmatrix}\\
&=&\begin{pmatrix}
      a\cdot J  & 0     \\
     0 & D- a^{-1}\cdot B^t\cdot J\cdot B
\end{pmatrix}.
\end{eqnarray*}
This means that $A$ is congruent to $\dia\{a\cdot J, D- a^{-1}\cdot B^t\cdot J\cdot B\}$. Note that 
$D- a^{-1}\cdot B^t\cdot J\cdot B$ is a skew-symmetric matrix of size $n-2$ and its rank equals  $m-2$ (because of $a\neq 0$). The induction hypothesis shows that it is congruent to $J_{m-2}$. Therefore, $A$ is congruent to $J_m$.
\end{proof}

The rest of this section is  devoted to giving a proof of Theorem \ref{thm1}.

Suppose that $\dim_\K(V)=n$ and fix a skew-symmetric bilinear form $\upomega$ of rank $m$. 
Choose a suitable basis for $V$ and by Theorem \ref{thm2.5}, we see that the skew-symmetric matrix corresponding to $\upomega$
is congruent to $J_m$ appeared in (\ref{eq2.5}). It is well-known in the theory of classical groups that
\begin{equation}
\label{ }
G_\upomega=\{g\in\GL_n(\K)\mid g^t\cdot J_m \cdot g=J_m\}
\end{equation}
is the singular symplectic group of rank $m$ over $\K$; see for example, \cite[Example 1.3.15]{Gec13} or \cite[Section 3.3]{Wan02}.
Moreover, the group $G_\upomega$ also can be viewed as the glueing group of the symplectic group of degree $m$ and the
general linear group of degree $n-m$ via the full bimodule; see \cite[Section 2 and Example 6.4]{CSW21}.

Given an element $g\in G_\upomega$ and
a bracket $[-,-]\in \LL_\upomega(\K)$,  we define a new bracket 
\begin{equation}
\label{action}
[x,y]_g:=g[g^{-1}(x),g^{-1}(y)]
\end{equation}
where  $x,y\in V$. Then

\begin{lem}
The new bracket  $[-,-]_g$ is an $\upomega$-Lie algebra structure on $V$. 
\end{lem}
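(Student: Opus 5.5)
The plan is to verify the two defining properties of an $\upomega$-Lie algebra for $[-,-]_g$ directly, using only that $g$ is linear and invertible and that $g\in G_\upomega$. Bilinearity of $[-,-]_g$ is immediate, since it is a composition of the linear maps $g^{-1}$ and $g$ with the bilinear map $[-,-]$. Skew-symmetry is also immediate: $[y,x]_g=g[g^{-1}(y),g^{-1}(x)]=-g[g^{-1}(x),g^{-1}(y)]=-[x,y]_g$.

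The main step is the $\upomega$-Jacobi identity. For $x,y,z\in V$, put $x'=g^{-1}(x)$, $y'=g^{-1}(y)$ and $z'=g^{-1}(z)$. Since $g^{-1}g=\mathrm{id}$, we have
$$[[x,y]_g,z]_g=g[g^{-1}g[x',y'],z']=g[[x',y'],z'].$$
Summing cyclically and using linearity of $g$, the left-hand side of (\ref{Jacobi}) for $[-,-]_g$ equals
$$g\bigl([[x',y'],z']+[[y',z'],x']+[[z',x'],y']\bigr).$$
Because $[-,-]\in\LL_\upomega(\K)$, the bracket $[-,-]$ satisfies (\ref{Jacobi}), so this expression becomes
$$g\bigl(\upomega(x',y')z'+\upomega(y',z')x'+\upomega(z',x')y'\bigr)=\upomega(x',y')z+\upomega(y',z')x+\upomega(z',x')y.$$

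It remains to replace $\upomega(x',y')$ by $\upomega(x,y)$, and similarly for the other two coefficients. This is the only place where the hypothesis $g\in G_\upomega$ enters. Since $G_\upomega$ is a group, $g^{-1}\in G_\upomega$ as well. Alternatively, apply the defining condition of $G_\upomega$ to $v=x'$ and $w=y'$: this gives $\upomega(x,y)=\upomega(g(x'),g(y'))=\upomega(x',y')$. Substituting in all three coefficients gives exactly the right-hand side of (\ref{Jacobi}) for $x,y,z$. So $(V,[-,-]_g,\upomega)$ is an $\upomega$-Lie algebra.

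If $\LL_\upomega(\K)$ is meant to consist of \emph{non-Lie} structures, one more check is needed: $[-,-]_g$ must again be non-Lie. This holds because $\upomega\neq 0$, and by the uniqueness proposition above the form attached to $[-,-]_g$ is exactly $\upomega$. No step is a real obstacle. The only point needing care is using $\upomega$-invariance in the direction of $g^{-1}$, which is justified because $G_\upomega$ is a group.
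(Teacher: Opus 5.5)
Your proposal is correct and follows the paper's proof essentially step for step. Like the paper, you get skew-symmetry by pulling the sign through $g$, rewrite $[[x,y]_g,z]_g=g[[g^{-1}(x),g^{-1}(y)],g^{-1}(z)]$, sum cyclically, apply the $\upomega$-Jacobi identity for $[-,-]$, and finish with $\upomega(g^{-1}(x),g^{-1}(y))=\upomega(x,y)$. Your explicit justification of that last invariance, which the paper uses silently, and your non-Lie check via uniqueness of $\upomega$ are correct additions rather than departures.
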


\begin{proof}
Suppose that $x,y,z\in V$ are arbitrary elements. It follows that 
 $[y,x]_g=g[g^{-1}(y),g^{-1}(x)]=-g[g^{-1}(x),g^{-1}(y)]=-[x,y]_g$. Thus
  $[-,-]_g$ is skew-symmetric. To show that the $\upomega$-Jacobi identity holds for   $[-,-]_g$, we observe that
  $$[[x,y]_g,z]_g=g[g^{-1}([x,y]_g),g^{-1}(z)]=g[[g^{-1}(x),g^{-1}(y)],g^{-1}(z)].$$
  Thus
  \begin{eqnarray*}
&&[[x,y]_g,z]_g+[[y,z]_g,x]_g+[[z,x]_g,y]_g \\
& = & g\bigg([[g^{-1}(x),g^{-1}(y)],g^{-1}(z)]+[[g^{-1}(y),g^{-1}(z)],g^{-1}(x)]+[[g^{-1}(z),g^{-1}(x)],g^{-1}(y)]\bigg) \\
 & =& g\bigg(\upomega(g^{-1}(x),g^{-1}(y))\cdot g^{-1}(z)+\upomega(g^{-1}(y),g^{-1}(z))\cdot g^{-1}(x)+\upomega(g^{-1}(z),g^{-1}(x))\cdot g^{-1}(y)\bigg) \\
 &=& \upomega(g^{-1}(x),g^{-1}(y))z+\upomega(g^{-1}(y),g^{-1}(z))x+\upomega(g^{-1}(z),g^{-1}(x))y\\
 &=& \upomega(x,y)z+\upomega(y,z)x+\upomega(z,x)y.
\end{eqnarray*}
This implies that the new bracket  $[-,-]_g$ defines an $\upomega$-Lie algebra structure on $V$. 
\end{proof}

Moreover, $G_\upomega$ can act on the affine variety $\LL_\upomega(\K)$ defined by
\begin{equation}
\label{eq2.8}
\left(g\cdot [-,-]\right)(x,y):=[x,y]_g=g[g^{-1}(x),g^{-1}(y)]
\end{equation}
where $g\in G_\upomega, [-,-]\in L_\upomega(\K)$, and $x,y\in V$. In fact, it it clear that $[x,y]_1=[x,y]$, which means that the identify element  of $G_\upomega$ fixes the original bracket $[-,-]$. For two elements $g,h\in G_\upomega$, note that 
\begin{eqnarray*}
((gh)\cdot [-,-])(x,y)&=&[x,y]_{gh}=(gh)([(gh)^{-1}(x),(gh)^{-1}(y)])\\
&=&g\Big(h([h^{-1}(g^{-1}(x)),h^{-1}(g^{-1}(y))])\Big)=g([g^{-1}(x),
g^{-1}(y)]_h)\\
&=&(g\cdot [-,-]_h)(x,y)=(g\cdot (h\cdot [-,-]))(x,y).
\end{eqnarray*}
Hence, $(gh)\cdot [-,-]=g\cdot (h\cdot [-,-])$, as we desire.

We are ready to prove Theorem \ref{thm1}. 

\begin{proof}[Proof of Theorem \ref{thm1}]
Clearly,  it suffices to show that two $\upomega$-Lie algebras in $\LL_\upomega(\K)$ are isomorphic if 
and only if they are in the same orbit, under the action of $G_\upomega$ defined in (\ref{eq2.8}). 

Suppose that $[-,-]$ and $[-,-]'$ are two $\upomega$-Lie algebras in $L_\upomega(\K)$. 
If they are isomorphic, then there exists an element $g\in G_\upomega$ such that
$g[x,y]'=[g(x),g(y)]$ for all $x,y\in V$. Thus
$$[x,y]'=g^{-1}[g(x),g(y)]=[x,y]_{g^{-1}}=\left(g^{-1}\cdot [-,-]\right)(x,y)$$
which means that  $[-,-]$ and $[-,-]'$ are in the same orbit. Conversely, 
suppose that $[-,-]'=g\cdot [-,-]$ for some $g\in G_\upomega$. Then 
$[x,y]'=\left(g\cdot [-,-]\right)(x,y)=[x,y]_g=g[g^{-1}(x),g^{-1}(y)]$. Thus
$$g^{-1}[x,y]'=[g^{-1}(x),g^{-1}(y)]$$
for all $x,y\in V$. Namely, $g^{-1}$ is an algebra isomorphism.  Since $g^{-1}\in G_\upomega$ is also $\upomega$-preserving, we see that $[-,-]'$ and $[-,-]$ are isomorphic as $\upomega$-Lie algebras. 
\end{proof}

\section{Geometric Structure of $\A_3(\K)$} \label{sec3}
\setcounter{equation}{0}
\renewcommand{\theequation}
{3.\arabic{equation}}
\setcounter{theorem}{0}
\renewcommand{\thetheorem}
{3.\arabic{theorem}}

\noindent In this section, we explore the geometric structure of the affine variety $\A_3(\K)$ and prove Theorem \ref{thm2}.
Suppose that $V$ denotes a 3-dimensional vector space over $\K$, spanned by a basis $\{x,y,z\}$. 

To understand the affine variety $\A_3(\K)$ of all $\upomega$-Lie algebra structures on $V$, we assume that $(L,[-,-],\upomega)$ denotes a generic element in $\A_3(\K)$.  By Theorem \ref{thm2.5}, we see that there exists a unique nonzero congruence canonical form for all $3\times 3$ nonzero skew-symmetric matrices over $\K$:
$$J_2=\begin{pmatrix}
     0 &1&0    \\
      -1&0&0    \\ 
        0 &0&0    \\ 
\end{pmatrix}.$$
Hence, we may take the $\upomega$ in $L$ as the skew-symmetric bilinear form defined by 
\begin{equation*}
\label{ }
\upomega(x,y)=1\textrm{ and } \upomega(x,z)=\upomega(y,z)=0.
\end{equation*}

Hence, $\A_3(\K)=\LL_{\upomega}(\K)$.  We may assume that 
\begin{equation}\label{eq3.1}
\begin{aligned}
[x,y] & =~  a_1x+a_2y+a_3z \\
[x,z] & =~  b_1x+b_2y+b_3z \\
[y,z] & =~  c_1x+c_2y+c_3z
\end{aligned}
\end{equation}
where all $a_i,b_i,c_i$ are scalars in $\K$. The skew-symmetry of the bracket product in $L$ shows that
the $\upomega$-algebra structure of $L$ could be determined by these scalars $\{a_i,b_i,c_i\mid i=1,2,3\}$. 
Moreover, the (\ref{Jacobi}) allows us to view $\A_3(\K)$ as an affine variety within the 9-dimensional affine space $\K^9$.

Let $R:=\K[x_i,y_i,z_i\mid i=1,2,3]$ denote the polynomial ring in $9$ variables over $\K$, where $x_i, y_i, z_i$ 
are dual to $a_i,b_i,c_i$, respectively. To understand the geometric structure of $\A_3(\K)$, we need to determine
the vanishing ideal $I(\A_3(\K))$ in $R$ and the coordinate ring $R/I(\A_3(\K))$. 

Define three polynomials in $R$ as follows:
\begin{equation}\label{eq3.2}
\begin{aligned}
f_1 & :=  x_2z_1 + y_3z_1 - x_1z_2 - y_1z_3 \\
f_2 & :=  x_3y_1 - x_1y_3 + x_3z_2 - x_2z_3 + 1\\
f_3 & :=   x_2y_1 - x_1y_2 - y_3z_2 + y_2z_3
\end{aligned}
\end{equation}
and write $\p:=\ideal{f_1,f_2,f_3}$ for the ideal generated by these three polynomials in $R$. 

\begin{lem}\label{lem3.1}
$\p\subseteq I(\A_3(\K))$.
\end{lem}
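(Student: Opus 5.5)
The plan is to show that each generator $f_1,f_2,f_3$ of $\p$ vanishes at every point $(a_i,b_i,c_i)\in\A_3(\K)$. Since $I(\A_3(\K))$ is an ideal, this gives $\p\subseteq I(\A_3(\K))$. I will compute the $\upomega$-Jacobi identity on the basis triple $(x,y,z)$ in terms of the structure constants (\ref{eq3.1}) and read off its three coordinates.

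First, I need that this single triple suffices. Both sides of (\ref{Jacobi}) are trilinear in their arguments. They are also alternating, because the bracket and $\upomega$ are skew-symmetric (Proposition \ref{prop2.1}) and $\cha(\K)\neq 2$. Since $\dim V=3$, such a map is determined by its value on $(x,y,z)$. So the variety $\A_3(\K)$ is cut out exactly by the three coordinate equations of the identity evaluated at $(x,y,z)$. For Lemma \ref{lem3.1} only the easy direction is needed: every point of $\A_3(\K)$ satisfies these equations.

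Next, I compute both sides. Using $\upomega(x,y)=1$ and $\upomega(y,z)=\upomega(z,x)=0$, the right-hand side is simply $z$. For the left-hand side, expand each term by bilinearity and (\ref{eq3.1}):
$$[[x,y],z]=a_1[x,z]+a_2[y,z],\quad [[y,z],x]=-c_2[x,y]-c_3[x,z],\quad [[z,x],y]=-b_1[x,y]+b_3[y,z].$$
Collecting coefficients of $x,y,z$ gives the following, after the $a_1b_1$ and $c_3b_3$-type terms cancel:
\begin{itemize}
\item the $x$-coefficient is $a_2c_1+b_3c_1-a_1c_2-b_1c_3$;
\item the $y$-coefficient is $a_1b_2-a_2b_1-b_2c_3+b_3c_2$;
\item the $z$-coefficient is $a_1b_3+a_2c_3-a_3c_2-a_3b_1$.
\end{itemize}
Equating these with the coordinates $(0,0,1)$ of the right-hand side, and substituting $x_i\mapsto a_i$, $y_i\mapsto b_i$, $z_i\mapsto c_i$, yields:
\begin{itemize}
\item $f_1=0$ from the $x$-coordinate;
\item $-f_3=0$ from the $y$-coordinate;
\item $-f_2=0$ from the $z$-coordinate, since $f_2=a_3b_1-a_1b_3+a_3c_2-a_2c_3+1$.
\end{itemize}

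Hence $f_1,f_2,f_3$ vanish on $\A_3(\K)$, and the lemma follows. The only step requiring care is this bookkeeping of signs and cancellations in the expansion. In particular, $[z,x]=-[x,z]$ and $[z,y]=-[y,z]$ must be used consistently. There is no genuine conceptual obstacle.
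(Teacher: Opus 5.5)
Your proposal is correct and takes essentially the same approach as the paper. Both evaluate the $\upomega$-Jacobi identity on the basis triple $(x,y,z)$ and read off $f_1$, $-f_3$, $-f_2$ as the $x$-, $y$-, $z$-coordinates; your signs and cancellations check out, and the paper's three extra sign-reversed relations from the other orderings are redundant, which your alternating-trilinear remark explains.
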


\begin{proof}
It suffices to show that $f_1,f_2,f_3$ all vanish on the generic $\upomega$-Lie algebra $L$. Computing   
(\ref{Jacobi}) with the variables running over any three of $\{x,y,z\}$ (not necessarily non-repeated), we may obtain 
the following six relations among $\{a_i,b_i,c_i\mid i=1,2,3\}$:
\begin{eqnarray*}
a_2b_1 - a_1b_2 - b_3c_2 + b_2c_3=0 && -a_2b_1 + a_1b_2 + b_3c_2 - b_2c_3=0\\
a_2c_1 + b_3c_1 - a_1c_2 - b_1c_3=0 &&-a_2c_1 - b_3c_1 + a_1c_2 + b_1c_3=0\\
a_3b_1 - a_1b_3 + a_3c_2 - a_2c_3 + 1=0&& -a_3b_1 + a_1b_3 - a_3c_2 + a_2c_3 - 1=0.
\end{eqnarray*}
Clearly, the three relations on the right-hand column are redundant. The first three relations on the left-hand column show that $f_i(L)=0$ for $i=1,2,3$. Therefore, $\p\subseteq I(\A_3(\K))$.
\end{proof}

\begin{prop}\label{prop3.2}
$\A_3(\K)=V(\p)$.
\end{prop}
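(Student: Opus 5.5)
We need to show that a point $(a_i,b_i,c_i)\in\K^9$ lies in $\A_3(\K)$ if and only if $f_1,f_2,f_3$ vanish at it. Since $\A_3(\K)$ is, by definition, the set of points whose bracket (\ref{eq3.1}) satisfies the $\upomega$-Jacobi identity, this amounts to a two-sided comparison of zero sets.

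The inclusion $\A_3(\K)\subseteq V(\p)$ is immediate from Lemma \ref{lem3.1}: every generic $\upomega$-Lie algebra $L$ annihilates $f_1,f_2,f_3$, hence lies in $V(\p)$.

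For the reverse inclusion, take a point of $V(\p)$ and define the skew-symmetric bracket on $V$ by (\ref{eq3.1}). I will show that the $\upomega$-Jacobi identity holds for all $u,v,w\in V$, with $\upomega$ the form fixed above ($\upomega(x,y)=1$, $\upomega(x,z)=\upomega(y,z)=0$). Write
$$\mathcal{J}(u,v,w):=[[u,v],w]+[[v,w],u]+[[w,u],v]-\upomega(u,v)w-\upomega(v,w)u-\upomega(w,u)v.$$
The map $\mathcal{J}$ is trilinear, since both the bracket and $\upomega$ are bilinear. It is also alternating: it is invariant under cyclic permutations by construction, and swapping two arguments changes its sign because both the bracket and $\upomega$ are skew-symmetric. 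Moreover $\mathcal{J}(u,u,w)=0$, since $[u,u]=0$ (char $\neq 2$), $\upomega(u,u)=0$, and the remaining terms cancel in pairs.

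Because $\mathcal{J}$ is alternating and trilinear on a $3$-dimensional space, it is determined by its single value $\mathcal{J}(x,y,z)$. Expanding $\mathcal{J}(x,y,z)$ in the basis $\{x,y,z\}$ gives the three coordinates; these are (up to sign) exactly the three left-hand relations listed in the proof of Lemma \ref{lem3.1}, i.e.\ $f_1,f_2,f_3$ evaluated at the point. Hence if the point lies in $V(\p)$, then $\mathcal{J}(x,y,z)=0$, so $\mathcal{J}\equiv 0$, and the point lies in $\A_3(\K)$.

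The only step needing care is the explicit expansion of $\mathcal{J}(x,y,z)$: one must check that its $x$-, $y$-, $z$-coefficients match $f_1,f_2,f_3$, including the constant $+1$ in $f_2$, which comes from $-\upomega(x,y)z$. This expansion is already implicit in the proof of Lemma \ref{lem3.1}, so the only real content here is the alternating-trilinear reduction to a single triple of basis vectors.
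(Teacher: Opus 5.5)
Your proof is correct and follows the same route as the paper: the inclusion $\A_3(\K)\subseteq V(\p)$ comes directly from Lemma \ref{lem3.1}, and the reverse inclusion is a check of the $\upomega$-Jacobi identity on basis vectors. Your reduction — $\mathcal{J}$ is alternating and trilinear, so it is determined by $\mathcal{J}(x,y,z)=f_1x-f_3y-f_2z$ (evaluated at the point) — is exactly what makes the paper's ``direct calculation'' rigorous, and your signs and the constant term coming from $-\upomega(x,y)z$ are right.
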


\begin{proof}
Since $\A_3(\K)$ is contained in its closure $\overline{\A_3(\K)}$ and the latter equals $V(I(\A_3(\K)))$, it follows from
Lemma \ref{lem3.1} that $\A_3(\K)\subseteq \overline{\A_3(\K)}=V(I(\A_3(\K)))\subseteq V(\p)$. Conversely,
for each point $L:=(a_1,a_2,a_3,b_1,b_2,b_3,c_1,c_2,c_3)$ in $V(\p)$, we see that
$f_i(L)=0$ for $i=1,2,3$. A direct calculation on (\ref{Jacobi}) shows that the point $L$, together with the three relations on the left-hand column appeared in the proof of Lemma \ref{lem3.1}, gives an 
$\upomega$-Lie algebra structure on $V$. This means that $V(\p)\subseteq\A_3(\K)$. Hence, $\A_3(\K)=V(\p)$.
\end{proof}

To show that $\p$ is a prime ideal in $R$, we need to define
\begin{equation}
\label{ }
M:=\begin{pmatrix}
   x_2+y_3   &  -x_1&-y_1  \\
    0  & x_3&-x_2\\
    0&-y_3&y_2 
\end{pmatrix}.
\end{equation}
Clearly, $\det(M)=(x_2+y_3)(x_3y_2-x_2y_3)\neq 0$ is not a zero divisor in $R$ because $R$ is an integral domain.
Throughout this section, we choose the grevlex ordering on $R$ with $x_1>x_2>x_3>y_1>y_2>y_3>z_1>z_2>z_3$ and define 
\begin{equation}
\begin{aligned}
g & :=  x_1y_2z_1 + y_1y_3z_1 - x_1y_1z_2 + y_3z_1z_2 - y_1^2z_3 - y_2z_1z_3\\
h & :=  x_1x_3y_2 - x_1x_2y_3 + x_2x_3z_2 + x_3y_3z_2 - x_2^2z_3 - x_3y_2z_3 + x_2.
\end{aligned}
\end{equation}

\begin{lem} \label{lem3.3}
$\B:=\{f_1,f_2,f_3,g,h\}$ is a Gr\"{o}bner basis for $\p$.
\end{lem}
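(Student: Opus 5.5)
The plan is to apply Buchberger's criterion directly. First we show $g,h\in\p$; then we show that every $S$-polynomial of a pair from $\B$ reduces to $0$ modulo $\B$ under grevlex with $x_1>x_2>x_3>y_1>y_2>y_3>z_1>z_2>z_3$.

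\textbf{Leading monomials.} We first read these off from the grevlex rule: among terms of equal degree, the one with the larger exponent in the smallest differing variable is the smaller term. In $f_1$ the terms containing $z_3$ and then $z_2$ are the smallest, and $y_3z_1<x_2z_1$, so $\LM(f_1)=x_2z_1$. In the same way one gets $\LM(f_2)=x_3y_1$ and $\LM(f_3)=x_2y_1$, because $x_1y_2<x_2y_1$. For the cubics we expect $\LM(g)=x_1y_2z_1$ and $\LM(h)=x_1x_3y_2$; this must be checked term by term, the competing terms being $x_1y_1z_2$, $y_1y_3z_1$, $x_1x_2y_3$, and so on.

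\textbf{Membership of $g$ and $h$.} The natural candidates are the first two nontrivial $S$-polynomials:
\[
S(f_1,f_3)=y_1f_1-z_1f_3 \quad\text{and}\quad S(f_2,f_3)=x_2f_2-x_3f_3 ,
\]
with leading-monomial lcms $x_2y_1z_1$ and $x_2x_3y_1$ respectively. I would compute each one and reduce it by $f_1,f_2,f_3$. I expect the remainders to be $\pm g$ and $\pm h$; the constant $1$ in $f_2$ is what produces the linear term $x_2$ in $h$. This gives explicit expressions of $g$ and $h$ as $R$-combinations of $f_1,f_2,f_3$, so $\langle\B\rangle=\p$. It also shows that these two $S$-pairs reduce to $0$ modulo $\B$.

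\textbf{Remaining $S$-pairs.} These are the pairs $(f_1,f_2)$, $(f_i,g)$, $(f_i,h)$ and $(g,h)$. We prune them as follows.
\begin{itemize}
\item Any pair whose leading monomials are coprime reduces to $0$ by Buchberger's first criterion. This covers $(f_1,f_2)$, since $x_2z_1$ and $x_3y_1$ are coprime, and also $(f_2,g)$.
\item The chain (lcm) criterion removes further pairs. For instance, $\LM(f_1)$ and $\LM(f_3)$ both divide the lcm of the leading monomials of other pairs, which may discard some of the mixed $(f_i,g)$ and $(f_i,h)$ pairs.
\end{itemize}
For the few pairs that survive, namely $(f_3,g)$, $(f_1,h)$, $(f_2,h)$ and $(g,h)$ (whose lcm is $x_1x_3y_2z_1$), I would carry out the division algorithm explicitly. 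The division uses only the five leading monomials, so each reduction is a finite bookkeeping exercise.

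\textbf{Main obstacle.} I expect the hardest step to be the reduction of $S(g,h)=x_3g-z_1h$ and of the mixed pairs involving $h$. The inhomogeneous term $x_2$ in $h$, inherited from the $+1$ in $f_2$, breaks homogeneity, so lower-degree debris appears that has to cancel exactly. Here I would first try to write each such $S$-polynomial directly as a combination $\sum q_if_i+q_gg+q_hh$ satisfying the standard representation bound $\LM(q_\ast\ast)\le\operatorname{lcm}$, rather than reducing blindly. As an independent check, the full computation can be confirmed in any computer algebra system.
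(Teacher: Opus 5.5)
Your framework is the same as the paper's: Buchberger's criterion, with $g=\spol(f_1,f_3)=y_1f_1-z_1f_3$ and $h=\spol(f_2,f_3)=x_2f_2-x_3f_3$ already reduced. Your leading monomials $x_2z_1$, $x_3y_1$, $x_2y_1$, $x_1y_2z_1$, $x_1x_3y_2$ are also correct. The gap is in how you sort the remaining pairs, and it has two parts.

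\emph{First}, the pairs $(f_3,g)$, $(f_1,h)$ and $(f_3,h)$ have coprime leading monomials. So two of your four ``survivors'' need no work at all.

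\emph{Second}, and more seriously, the pair $(f_1,g)$ disappears from your list, even though its leading monomials share $z_1$. You seem to rely on the chain criterion to remove it. But no leading monomial of $f_2$, $f_3$ or $h$ divides $\lcm(x_2z_1,x_1y_2z_1)=x_1x_2y_2z_1$, so the chain criterion cannot discard it. In fact, here the chain criterion discards only pairs that are already coprime. As written, your plan never verifies $\spol(f_1,g)$, so Buchberger's criterion is not established. This is exactly the pair the paper treats, via
\[
\spol(f_1,g)=x_1y_2f_1-x_2g=(y_2z_3-y_3z_2)f_1-(y_3z_1-x_1z_2-y_1z_3)f_3 .
\]
The two summands have leading monomials $x_2y_3z_1z_2$ and $x_2y_1y_3z_1$, both below $x_1x_2y_2z_1$.

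The complete list of nontrivial checks is therefore $(f_1,g)$, $(f_2,h)$, $(g,h)$. You are right to include the last two. The paper's proof declares everything except $(f_1,g)$ immediate, and so it actually passes over them. Both go through:
\[
\spol(f_2,h)=x_1y_2f_2-y_1h=(x_1y_3-x_3z_2+x_2z_3-1)f_3+(y_2z_3-y_3z_2)f_2,
\]
\[
\spol(g,h)=x_3g-z_1h=(x_1y_3-x_3z_2+x_2z_3-1)f_1+(y_3z_1-x_1z_2-y_1z_3)f_2 .
\]
The lower-degree terms coming from the constant in $f_2$ are absorbed by the $-f_3$ and $-f_1$ summands. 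In the first identity the summands have leading monomials $x_1x_2y_1y_3$ and $x_3y_1y_3z_2$, both below the lcm $x_1x_3y_1y_2$. In the second they are $x_1x_2y_3z_1$ and $x_3y_1y_3z_1$, both below $x_1x_3y_2z_1$. So once you restore $(f_1,g)$ and drop the coprime pairs, your plan gives a complete proof, and in fact a more complete one than the paper's.
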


\begin{proof} 
To prove this statement, we will use the Buchberger’s algorithm; see for example, \cite[Algorithm1.1.9]{DK15}. 
Note that $\p=\ideal{f_1,f_2,f_3}$. Since the leading monomials of $f_1$ and $f_2$ are coprime, thus $\spol(f_1,f_2)$ reduces to zero modulo  $\{f_1,f_2,f_3\}$ by \cite[Proposition 4, page 106]{CLO15}. Moreover, let us recall the definition of $s$-polynomial of any two polynomials $f$ and $g$:
$$\spol(f,g)=\frac{\LC(g)\cdot t}{\LM(f)}\cdot f- \frac{\LC(f)\cdot t}{\LM(g)}\cdot g$$
where $t:=\lcm(\LM(f),\LM(g))$ denotes the least common multiple. Thus
$$\spol(f_1,f_3)=\frac{1\cdot x_2y_1z_1}{x_2z_1}\cdot f_1- \frac{1\cdot x_2y_1z_1}{x_2y_1}\cdot f_3=y_1f_1-z_1f_3=g.$$
Clearly, no term of $g$ is divisible by anyone of $\{\LM(f_1),\LM(f_2),\LM(f_3)\}$. Thus the normal form of $g$ with respect to $\{f_1,f_2,f_3\}$ is itself. Similarly, since 
$$\spol(f_2,f_3)=x_2f_2-x_3f_3=h$$
whose normal form with respect to $\{f_1,f_2,f_3\}$ being itself as well, we can include $g$ and $h$ into 
$\{f_1,f_2,f_3\}$ and obtain the set $\B$. 

To complete the proof, we need to check that any two elements of $\B$
wouldn't produce an $s$-polynomial with nonzero normal form with respect to $\B$. This is immediate for any two in $\{f_1,f_2,f_3\}$ and any two in $\B$ whose leading monomials are coprime. Thus it suffices to show that 
the normal form of $\spol(f_1,g)$ is zero with respect to $\B$. Applying \cite[Algorithm1.1.6]{DK15}, a direct verification shows that the normal form of $\spol(f_1,g)$ is zero with respect to $\B$. In fact,
$$\spol(f_1,g)=(y_2z_3-y_3z_2)f_1-(y_3z_1 - x_1z_2 - y_1z_3)f_3.$$
Therefore, it follows from \cite[Theorem 3, page 105]{CLO15} that $\B$ is a Gr\"{o}bner basis for $\p$.
\end{proof}

\begin{lem} \label{lem3.4}
$\{f_1,f_2\}$ is a Gr\"{o}bner basis for the ideal $\ideal{f_1,f_2}$.
\end{lem}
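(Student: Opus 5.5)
Under the grevlex ordering fixed before Lemma \ref{lem3.3}, with $x_1>x_2>x_3>y_1>y_2>y_3>z_1>z_2>z_3$, we first read off the leading monomials of $f_1$ and $f_2$. All terms of $f_1=x_2z_1+y_3z_1-x_1z_2-y_1z_3$ have degree $2$. Grevlex compares the smallest variable first, so the terms containing $z_3$ or $z_2$ are the smallest. The remaining candidates are $x_2z_1$ and $y_3z_1$, and comparing their second factors gives $x_2>y_3$. Hence $\LM(f_1)=x_2z_1$, which agrees with the computation of $\spol(f_1,f_3)$ in the proof of Lemma \ref{lem3.3}.

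For $f_2=x_3y_1-x_1y_3+x_3z_2-x_2z_3+1$, the top-degree terms are $x_3y_1,x_1y_3,x_3z_2,x_2z_3$. Applying the same rule, we discard $x_2z_3$ (it contains $z_3$), then $x_3z_2$ (it contains $z_2$), then $x_1y_3$ (it contains $y_3$). This leaves $\LM(f_2)=x_3y_1$.

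The two leading monomials $x_2z_1$ and $x_3y_1$ share no variable, so they are coprime. By the criterion already used in Lemma \ref{lem3.3} (\cite[Proposition 4, page 106]{CLO15}), $\spol(f_1,f_2)$ then reduces to zero modulo $\{f_1,f_2\}$. This is the only $S$-pair to check, so Buchberger's criterion \cite[Theorem 3, page 105]{CLO15} shows that $\{f_1,f_2\}$ is a Gr\"{o}bner basis for $\ideal{f_1,f_2}$.

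If one prefers to see this explicitly, the standard identity
$$\spol(f_1,f_2)=x_3y_1f_1-x_2z_1f_2=-(f_2-x_3y_1)f_1+(f_1-x_2z_1)f_2$$
gives a standard representation whose terms all have leading monomial strictly below $x_2z_1x_3y_1$. The only point that needs care is getting the grevlex leading terms right, and in particular not being misled by the constant term $1$ in $f_2$, which is irrelevant to the leading monomial.
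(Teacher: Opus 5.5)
Your proof is correct and follows the paper's argument: you compute $\LM(f_1)=x_2z_1$ and $\LM(f_2)=x_3y_1$ under grevlex, note that these are coprime, and conclude with the coprime-leading-monomial criterion and Buchberger's criterion. Your explicit check of the leading monomials and of the lcm representation of $\spol(f_1,f_2)$ is a correct extra verification that the paper leaves implicit.
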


\begin{proof}
Note that $\LM(f_1)=x_2z_1$ and $\LM(f_2)=x_3y_1$. As they are coprime, it follows from \cite[Proposition 4, page 106]{CLO15} that the $s$-polynomial $\spol(f_1,f_2)$ reduces to zero modulo  $\{f_1,f_2\}$. By \cite[Theorem 3, page 105]{CLO15}, we deduce that $\{f_1,f_2\}$ is a Gr\"{o}bner basis for  $\ideal{f_1,f_2}$.
\end{proof}

\begin{lem}   \label{lem3.5}
$\ideal{f_1}$ and $\ideal{f_1,f_2}$ are prime ideals in $R$. 
\end{lem}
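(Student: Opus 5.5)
We will argue that both ideals are prime by combining unique factorization in $R$ with a localization argument, using Lemma \ref{lem3.4} to control associated primes.

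\emph{The ideal $\ideal{f_1}$.} Since $R$ is a UFD, it suffices to show that $f_1$ is irreducible. Put $u:=x_2+y_3$ and apply the linear change of variables that replaces $x_2$ by $u$. This is an automorphism of $R$, and it turns $f_1$ into $uz_1-x_1z_2-y_1z_3$. That is a homogeneous quadratic form of rank $6$. A product of two linear forms has rank at most $2$, so $f_1$ is irreducible and $\ideal{f_1}$ is prime.

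\emph{The ideal $\ideal{f_1,f_2}$: setup.} Write $I:=\ideal{f_1,f_2}$ and $s:=x_2+y_3$. By Lemma \ref{lem3.4}, $\{f_1,f_2\}$ is a Gr\"obner basis, and the leading monomials $x_2z_1$ and $x_3y_1$ are coprime. Hence $f_1,f_2$ is a regular sequence, because the leading monomials form one. Therefore $R/I$ is a complete intersection, hence Cohen--Macaulay, and $I$ is unmixed: all its associated primes have height $2$. The plan is to show two things: (i) $IR_s$ is prime in $R_s$, and (ii) $s$ is a nonzerodivisor on $R/I$. Together these give $I=IR_s\cap R$, which is prime.

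\emph{Step (i).} In $R_s$ we have $f_1=sz_1-x_1z_2-y_1z_3$, which is linear in $z_1$ with unit coefficient. Since $f_2$ does not involve $z_1$, we get
$$R_s/IR_s\cong \K[x_1,x_2,x_3,y_1,y_2,y_3,z_2,z_3]_s/\ideal{f_2}.$$
Now $f_2=x_3(y_1+z_2)-x_1y_3-x_2z_3+1$. After the substitution $w=y_1+z_2$, its quadratic part $x_3w-x_1y_3-x_2z_3$ has rank $6$. If $f_2$ factored into two linear factors, its quadratic part would have rank at most $2$. Hence $f_2$ is irreducible, so $\ideal{f_2}$ is prime in the polynomial ring. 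Moreover $s\notin\ideal{f_2}$ for degree reasons, so $\ideal{f_2}$ stays prime after inverting $s$.

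\emph{Step (ii), the main obstacle.} Because $I$ is unmixed, it is enough to show $\operatorname{ht}\ideal{f_1,f_2,s}=3$; then $s$ lies in no associated prime of $I$. Set $y_3=-x_2$. Then $f_1$ becomes $f_1'=-x_1z_2-y_1z_3$, which has rank $4$ and is therefore irreducible. Also $f_2$ becomes $f_2'=x_3y_1+x_1x_2+x_3z_2-x_2z_3+1$, which is not divisible by $f_1'$ (for instance, it has a nonzero constant term). So $f_1',f_2'$ is a regular sequence in $\K[x_1,x_2,x_3,y_1,y_2,z_1,z_2,z_3]$, the height is $2+1=3$, and $s$ is a nonzerodivisor on $R/I$.

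Combining (i) and (ii): $I=IR_s\cap R$ is the contraction of a prime ideal, hence prime. If the unmixedness argument causes trouble, the fallback is to verify $I:s=I$ directly, by computing a Gr\"obner basis of $I\cap\ideal{s}$ via elimination.
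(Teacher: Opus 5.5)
Your proof is correct, and it takes a genuinely different route from the paper for $\ideal{f_1,f_2}$.

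\textbf{The paper's route.} It inverts $\Delta=x_1x_2+x_3y_1$, the determinant of the coefficient matrix of $(z_2,z_3)$ in $f_1,f_2$. This lets it solve both relations at once, so $R/\ideal{f_1,f_2}$ localized at $\Delta$ becomes a localization of $\K[x_1,x_2,x_3,y_1,y_2,y_3,z_1]$. It then shows $\ideal{f_1,f_2}:\Delta=\ideal{f_1,f_2}$ by an elimination computation: a Gr\"obner basis of $\ideal{f_1,f_2}\cap\ideal{\Delta}$ obtained in $R[z]$.

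\textbf{Your route.} You invert the linear form $s=x_2+y_3$ and solve only $f_1$ for $z_1$. What remains is the localized hypersurface ring $A_s/(f_2)$, which is a domain because $f_2$ is irreducible by the rank argument on its quadratic part. You then replace the machine check of $I:s=I$ with a structural argument. Since $f_1,f_2$ is a regular sequence, $R/I$ is Cohen--Macaulay and $I$ is unmixed of height $2$. It therefore suffices that every prime containing $I+\ideal{s}$ has height at least $3$. You read this off on the hyperplane $y_3=-x_2$ from the primality of $\ideal{f_1'}$ and the constant term of $f_2'$.

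Two small checks on your write-up. First, your justification of the regular sequence is fine even though $f_2$ is inhomogeneous: for two polynomials it reduces to $f_1$ being prime and not dividing $f_2$. Second, properness of $\ideal{f_1',f_2'}$ is not needed, since the height condition holds vacuously otherwise. Choosing $s$ linear is what makes the hyperplane section trivial to analyze.

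\textbf{What each buys.} Your approach is entirely hand-checkable with no computer algebra. It also supplies an actual proof that $f_1$ is irreducible, which the paper only asserts. The paper's approach is a uniform algorithmic template, reused verbatim for $\det(M)$ in Lemma~\ref{lem3.6}. Its cost is that it rests on an unreproduced Gr\"obner computation rather than on a conceptual reason.
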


\begin{proof}
Note that $f_1$ is irreducible in $R$ and $R$ is factorial, thus $f_1$ is a prime element. This implies that $\ideal{f_1}$ is a prime ideal. To show that $\ideal{f_1,f_2}$ is prime, we write $R_2$ for $R/\ideal{f_1,f_2}$ and it suffices to show that
it is an integral domain. Denote by $\Delta$ the determinant of $\begin{pmatrix}
    x_1  & y_1   \\
    -x_3  & x_2 
\end{pmatrix}$, i.e., $\Delta:=x_1x_2+x_3y_1\neq 0$. 

We \textit{claim} that the class of $\Delta$ is not a zero divisor in $R_2$. If this claim holds, then $R_2$ can be embedded in the localization $R_2\left[\Delta^{-1}\right]$. Moreover, note that $\Delta\neq 0$ in $R$. Solving the following linear system
$$\begin{pmatrix}
    x_1  & y_1   \\
    -x_3  & x_2 
\end{pmatrix}\begin{pmatrix}
      z_2    \\
      z_3  
\end{pmatrix}=\begin{pmatrix}
      x_2z_1 + y_3z_1    \\
      x_3y_1 - x_1y_3-1  
\end{pmatrix}$$
implies that 
$$R_2\left[\Delta^{-1}\right]\cong \K[x_1,x_2,x_3,y_1,y_2,y_3,z_1]\left[\Delta^{-1}\right]$$
and the latter is an integral domain  because it is a localization of a polynomial ring. Hence, $R_2$, as a subring of 
$R_2\left[\Delta^{-1}\right]$ is also an integral domain.

Hence, it suffices to prove the claim above. Apparently, this claim is equivalent to the statement that
the colon ideal $\ideal{f_1,f_2}: \Delta$ is equal to $\ideal{f_1,f_2}$. By \cite[Section 1.2.4]{DK15}, we see that
\begin{equation}
\label{ }
\ideal{f_1,f_2}: \Delta=\Delta^{-1}\cdot \left(\ideal{f_1,f_2}\cap \ideal{\Delta}\right).
\end{equation} 
A standard method of finding Gr\"{o}bner basis of the intersection of two ideals can be applied. Let us introduce a new variable $z$ and consider the ideal $J:=\ideal{f_1,f_2}\cdot z+\ideal{\Delta}\cdot (1-z)$ in $R[z]$.
Extending the grevlex ordering to $R[z]$ with $z>x_1>x_2>\cdots>z_3$ and using the same method in Lemma \ref{lem3.3}, we obtain a Gr\"{o}bner basis $\D=\{\Delta\cdot f_1, \Delta\cdot f_2,z\cdot f_1,z\cdot f_2, h_1,h_2\}$ for $J$, where
\begin{eqnarray*}
h_1 &:= & zx_1x_2 + zx_1y_3 - zx_3z_2 + zx_2z_3 - x_1x_2 - x_3y_1 - z \\
h_2 &:= & zx_1^2z_2 - zx_3z_1z_2 + zx_1y_1z_3 - zy_3z_1z_3 + zx_1z_2z_3 + zy_1z_3^2 - 
        x_1x_2z_1 - x_3y_1z_1 - zz_1. 
\end{eqnarray*}
According to \cite[Algorithm1.2.1]{DK15}, to obtain a Gr\"{o}bner basis for $\ideal{f_1,f_2}\cap \ideal{\Delta}$,
 we need to remove all elements of $\D$ that involve the extra variable $z$. Clearly,
 $$\{\Delta\cdot f_1, \Delta\cdot f_2\}$$
is a Gr\"{o}bner basis for $\ideal{f_1,f_2}\cap \ideal{\Delta}$, and thus $\{f_1, f_2\}$
is a Gr\"{o}bner basis for $\ideal{f_1,f_2}: \Delta$. 
Therefore, $\ideal{f_1,f_2}: \Delta=\ideal{f_1,f_2}$. This completes the proof of the claim. 
\end{proof}

\begin{lem} \label{lem3.6}
The class of $\det(M)$ in $R/\p$ is not a zero divisor.
\end{lem}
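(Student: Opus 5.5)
The plan is to imitate the colon-ideal argument of Lemma \ref{lem3.5}. The class of $\det(M)$ is a non-zero-divisor in $R/\p$ exactly when $\p:\det(M)=\p$. Since $\det(M)=(x_2+y_3)(x_3y_2-x_2y_3)$, it suffices to prove that each factor separately is a non-zero-divisor modulo $\p$. If $uv\cdot r\in\p$, then applying the statement for $u$ and then for $v$ gives $r\in\p$. So I will show
$$\p:(x_2+y_3)=\p \quad\textrm{and}\quad \p:(x_3y_2-x_2y_3)=\p.$$
Alternatively, I will treat $\det(M)$ in one step if the computation is manageable.

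For each factor $d$, I will use the identity $\p:d=d^{-1}\cdot(\p\cap\ideal{d})$ from \cite[Section 1.2.4]{DK15}. To compute the intersection, I form $J:=\p\cdot z+\ideal{d}\cdot(1-z)$ in $R[z]$, with the grevlex ordering extended by $z>x_1>\cdots>z_3$. Starting from the generators $zf_1,zf_2,zf_3,(1-z)d$, I run Buchberger's algorithm as in Lemma \ref{lem3.3}. For the starting data it helps that we already know the Gr\"obner basis $\B=\{f_1,f_2,f_3,g,h\}$ of $\p$ from Lemma \ref{lem3.3}. Eliminating $z$ by \cite[Algorithm 1.2.1]{DK15} should leave exactly $\{d f_1,d f_2,d f_3,d g,d h\}$, or something generating the same ideal. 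Dividing by $d$ then yields $\p$.

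There is an additional check available. Because $\B$ is a Gr\"obner basis, $\p:d=\p$ is equivalent to the following: whenever $d\cdot r$ reduces to $0$ modulo $\B$, so does $r$. This criterion can be used to verify the output of the elimination.

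A more conceptual route is also available as a fallback, and I would present it if the machine computation is unwieldy. The matrix $M$ is the coefficient matrix of $f_1,f_2,f_3$ viewed as linear forms in $(z_1,z_2,z_3)$. Indeed, $f_1=(x_2+y_3)z_1-x_1z_2-y_1z_3$, $f_2=x_3z_2-x_2z_3+(x_3y_1-x_1y_3+1)$ and $f_3=-y_3z_2+y_2z_3+(x_2y_1-x_1y_2)$. So after inverting $\det(M)$, the system can be solved uniquely for $z_1,z_2,z_3$, which gives
$$(R/\p)[\det(M)^{-1}]\cong \K[x_i,y_i][\det(M)^{-1}].$$
This localized ring is a domain of dimension $6$, and this is presumably how $\det(M)$ will be used afterwards.

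The non-zero-divisor property still needs an argument. I would argue that $\p$ is a complete intersection of height $3$: $f_3$ is a non-zero-divisor modulo the prime $\ideal{f_1,f_2}$ of Lemma \ref{lem3.5}, since $f_3\notin\ideal{f_1,f_2}$ by Lemma \ref{lem3.4}. Hence $R/\p$ is Cohen--Macaulay and has no embedded primes. It then suffices to check that $\det(M)$ lies in no minimal prime of $\p$, that is, $\dim R/(\p+\ideal{\det M})<6$. That dimension can be read off from a Gr\"obner basis of $\p+\ideal{\det M}$.

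The main obstacle is the actual Gr\"obner basis computation for $J$, or for $\p+\ideal{\det M}$, since the degrees grow. I expect the factor $x_3y_2-x_2y_3$ to be the harder case. I would first try splitting $\det(M)$ into its two factors to keep the Buchberger steps short.
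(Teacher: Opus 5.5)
Your main plan is essentially the paper's proof. The paper writes $\p:\det(M)=\det(M)^{-1}\cdot(\p\cap\ideal{\det(M)})$, computes a Gr\"obner basis (with $26$ elements) of $\p\cdot z+\ideal{\det(M)}\cdot(1-z)$ in $R[z]$, and finds that its $z$-free part is $\{\det(M)\cdot f\mid f\in\B\}$. The paper does this in one step; your splitting into the factors $x_2+y_3$ and $x_3y_2-x_2y_3$ is a harmless variant. Your fallback is also correct and not circular: it uses Lemmas \ref{lem3.4} and \ref{lem3.5} only to make $R/\p$ an unmixed complete intersection of dimension $6$. The remaining bound $\dim R/(\p+\ideal{\det(M)})\leqslant 5$ can even be checked by hand by splitting according to the rank of $M$. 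For instance, on $x_2+y_3=0$ the augmented $3\times 3$ determinant of the system in $(z_2,z_3)$ is $x_2y_1-x_1y_2$; on the rank-$2$ part of $x_3y_2=x_2y_3$, consistency again cuts out a hypersurface.
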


\begin{proof}
We use the same method for the claim appeared in Lemma \ref{lem3.5} to prove this statement.  
It suffices to verify that the colon ideal $\p: \det(M)$
is equal to $\p$. By \cite[Section 1.2.4]{DK15}, we see that
\begin{equation}
\label{ }
\p: \det(M)=\frac{1}{\det(M)}\cdot (\p\cap \ideal{\det(M)}).
\end{equation}
Introduce a new variable $z$ and consider the ideal $J:=\p\cdot z+\ideal{\det(M)}\cdot (1-z)$ in $R[z]$.
Extending the grevlex ordering to $R[z]$ with $z>x_1>\cdots>z_3$, we may obtain a Gr\"{o}bner basis $\D$  for $J$, consisting of $26$ elements.
By \cite[Algorithm1.2.1]{DK15},  removing all elements of $\D$ that involve the variable $z$ produces  a Gr\"{o}bner basis $\D'$  for $\p\cap \ideal{\det(M)}$, where $\D'=\{\det(M)\cdot f\mid f\in \B\}$. Thus $\p: \det(M)$ can be generated by
$\B$ and it follows from Lemma \ref{lem3.3} that $\p: \det(M)=\p$. Therefore, the class of $\det(M)$ in $R/\p$ is not a zero divisor.
\end{proof}

\begin{prop} \label{prop3.7}
$\p$ is a prime ideal in $R$ of height $3$.
\end{prop}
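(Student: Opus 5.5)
The plan is to copy the localization argument from the proof of Lemma \ref{lem3.5}, with $\det(M)$ in place of $\Delta$. By Lemma \ref{lem3.6}, the class of $\det(M)$ is not a zero divisor in $R/\p$. We also need $\p\neq R$. This follows from Lemma \ref{lem3.3}: none of the leading monomials of the Gr\"{o}bner basis $\B$ is constant, so $1\notin\p$. Hence $R/\p$ is a nonzero ring and embeds into the localization $(R/\p)\left[\det(M)^{-1}\right]$. It therefore suffices to show that this localization is an integral domain of Krull dimension $6$.

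The key observation is that $f_1,f_2,f_3$ are linear in $z_1,z_2,z_3$, and their coefficient matrix is exactly $M$. Explicitly, the system $f_1=f_2=f_3=0$ reads
$$M\begin{pmatrix} z_1\\ z_2\\ z_3\end{pmatrix}=\begin{pmatrix} 0\\ -(x_3y_1-x_1y_3+1)\\ -(x_2y_1-x_1y_2)\end{pmatrix}.$$
The first row is $(x_2+y_3)z_1-x_1z_2-y_1z_3=f_1$. The second and third rows come from $f_2$ and $f_3$ after moving the $z$-free terms to the right-hand side. After inverting $\det(M)$, Cramer's rule expresses each $z_i$ as a polynomial in $x_1,x_2,x_3,y_1,y_2,y_3$ and $\det(M)^{-1}$.

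This gives an isomorphism
$$(R/\p)\left[\det(M)^{-1}\right]\cong \K[x_1,x_2,x_3,y_1,y_2,y_3]\left[\det(M)^{-1}\right],$$
where $\det(M)=(x_2+y_3)(x_3y_2-x_2y_3)$ is a nonzero polynomial in the $x$'s and $y$'s alone. To verify it, send $z_i$ to its Cramer expression; the map is well defined because $f_1,f_2,f_3$ map to $0$. Its inverse is induced by the inclusion of $\K[x,y]$. The right-hand side is a localization of a polynomial ring at a nonzero element, hence an integral domain. So $R/\p$ is a domain and $\p$ is prime.

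For the height, note that $R/\p$ and its localization have the same fraction field, namely $\K(x_1,x_2,x_3,y_1,y_2,y_3)$, whose transcendence degree over $\K$ is $6$. Since $R/\p$ is a finitely generated $\K$-domain, $\dim R/\p=6$. Because $R$ is an affine domain, $\operatorname{ht}(\p)+\dim R/\p=\dim R=9$, so $\operatorname{ht}(\p)=3$.

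The only real obstacle, that $\det(M)$ is a non-zero-divisor modulo $\p$, is already handled by Lemma \ref{lem3.6}. What remains is checking that $M$ is exactly the coefficient matrix of the $z$-variables, which is a direct inspection of (\ref{eq3.2}).
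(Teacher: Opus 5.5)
Your proof is correct. The primality half follows the paper: invert $\det(M)$, solve $f_1=f_2=f_3=0$ for $z_1,z_2,z_3$, and use Lemma~\ref{lem3.6} to embed $R/\p$ into $\K[x_1,x_2,x_3,y_1,y_2,y_3][\det(M)^{-1}]$. You are more explicit than the paper about the inverse isomorphism and about $\p\neq R$.

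The height half takes a different route. The paper defines an auxiliary homomorphism $\uppi$ into that localized ring and gets $\operatorname{ht}\ker(\uppi)=3$ from the dimension formula. It then forces $\p=\ker(\uppi)$ using the chain of primes $0\subset\ideal{f_1}\subset\ideal{f_1,f_2}\subset\p$ from Lemma~\ref{lem3.5}. You instead read off $\dim R/\p=6$ from its fraction field and apply the dimension formula to $\p$ directly, so Lemma~\ref{lem3.5} is not needed for this proposition at all.

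Your route also avoids a slip in the paper. There, $\uppi$ sends $z_1\mapsto 0$ and sends $z_2,z_3$ to the right-hand sides divided by $\det(M)$, which is not the Cramer solution. With that map $\uppi(f_2)\neq 0$: already after setting $x_1=y_1=0$ one gets $1-x_3/\det(M)$. So $\p\subseteq\ker(\uppi)$ fails as written. If $\uppi$ is replaced by your Cramer map, then $\ker(\uppi)=\p$ follows from the embedding itself, and the chain argument becomes superfluous.
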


\begin{proof}
To prove that $\p$ is prime, it suffices to show that $R/\p$ is an integral domain. Solving the linear system of
$f_1=f_2=f_3=0$ obtains that $z_1,z_2,z_3$ can be expressed rationally by other variables. Moreover,
$$(R/\p)\left[\frac{1}{\det(M)}\right]\cong \K[x_1,x_2,x_3,y_1,y_2,y_3]\left[\frac{1}{\det(M)}\right]$$
is an integral domain. By Lemma \ref{lem3.6},  the class of $\det(M)$ is not a zero divisor in $R/\p$, thus $R/\p$ can be embedded into $(R/\p)\left[\frac{1}{\det(M)}\right]$. Hence, $R/\p$ is also an integral domain and $\p$ is prime. 

To compute the height of $\p$, we consider the following algebra homomorphism 
\begin{equation}
\label{ }
\uppi: R\ra \K[x_1,x_2,x_3,y_1,y_2,y_3]\left[\frac{1}{\det(M)}\right]
\end{equation}
defined by $x_i\mapsto x_i, y_i\mapsto y_i,z_1\mapsto 0$, and   
$$z_2\mapsto \frac{1}{\det(M)}(x_1y_3-x_3y_1 - 1), z_3\mapsto \frac{1}{\det(M)}(x_1y_2-x_2y_1).$$ Note that the image of $\uppi$
is an integral domain of Krull dimension $6$, thus it follows from \cite[Theorem 1.8A]{Har77} that
$\ker(\uppi)$ is a prime ideal of height $3$. Clearly, $\p\subseteq \ker(\uppi)$. It follows from Lemma \ref{lem3.5} that
\begin{equation}
\label{ }
0\subset \ideal{f_1}\subset \ideal{f_1,f_2}\subset \p\subseteq \ker(\uppi)
\end{equation}
is a chain of prime ideals in $R$. Hence, $\p=\ker(\uppi)$ and is of height $3$.
\end{proof}

We are ready to prove Theorem \ref{thm2}.

\begin{proof}[Proof of Theorem \ref{thm2}]
By Propositions \ref{prop3.2} and \ref{prop3.7}, we see that $\A_3(\K)$ is a 6-dimensional irreducible affine variety, showing the first statement in Theorem \ref{thm2}. 

To prove the second statement, we need to show that $f_1,f_2,f_3$ is a regular sequence in $R$.
Since $R$ is an integral domain and $f_1\neq 0$, it follows that $f_1$ is not a zero divisor in $R$ and so $f_1$
is regular in $R$. To show that $f_1,f_2$ is regular in $R$, it suffices to show that $f_2$ is not a zero divisor in 
$R/\ideal{f_1}$. By Lemma \ref{lem3.5}, $R/\ideal{f_1}$ is an integral domain. Note that the normal form of $f_2$ with respect to $\{f_1\}$ is nonzero, thus the class of $f_2$ in $R/\ideal{f_1}$ is not a zero divisor. To see that $f_1,f_2,f_3$ is regular in $R$, we need  to show that the class of $f_3$ is not a zero divisor in $R/\ideal{f_1,f_2}$. Note that Lemma \ref{lem3.4} tells us that $\{f_1,f_2\}$ is a Gr\"{o}bner basis of $\ideal{f_1,f_2}$, and using this fact, we may deduce that the normal form of $f_3$ with respect to $\{f_1,f_2\}$ is itself. Thus it is nonzero.  Applying Lemma \ref{lem3.5} again, we see that $R/\ideal{f_1,f_2}$ is an integral domain. Hence, the class of $f_3$ is not a zero divisor in $R/\ideal{f_1,f_2}$. This completes the proof of Theorem 
\ref{thm2}.
\end{proof}

\begin{rem}\label{rem3.8}
{\rm 
Note that the notions of geometrically complete intersections and algebraically complete intersections are, in general, not equivalent. Specifically, an affine variety that is a complete intersection in the geometric sense may have a vanishing ideal that is not a complete intersection ideal, and conversely. However, in our case, $\A_3(\K)$  constitutes a complete intersection in both the geometric and algebraic senses.
\hbo}\end{rem}

\section{Classification of Three-dimensional $\upomega$-Lie Algebras}\label{sec4}
\setcounter{equation}{0}
\renewcommand{\theequation}
{4.\arabic{equation}}
\setcounter{theorem}{0}
\renewcommand{\thetheorem}
{4.\arabic{theorem}}

\noindent This section uses Theorems \ref{thm1} and \ref{thm2} to give a complete classification of all 3-dimensional non-Lie $\upomega$-Lie algebras over an algebraically closed field $\K$ of characteristic $\neq 2$. 

Note that
$\A_3(\K)=\LL_\upomega(\K)$, where we may assume that 
\begin{equation}
\label{ }
\upomega(x,y)=1\textrm{ and } \upomega(x,z)=\upomega(y,z)=0.
\end{equation}
Recall that the symplectic group $\Sp_2(\K)=\SL_2(\K)$; see \cite[Theorem 8.1]{Tay92}. Thus, the corresponding 
stabilizer of $\upomega$ in $\GL_3(\K)$ is 
\begin{equation}
\label{eq4.2}
G_\upomega=\left\{\begin{pmatrix}
  S    &  0  \\
   T   &  d
\end{pmatrix}\mid S\in\SL_2(\K), T\in \K^2,  d\in\K^\times\right\}.
\end{equation}
Throughout this subsection, unless otherwise stated,  we always suppose that  $g$ denotes a generic element in $G_\upomega$ such that 
\begin{equation}
\label{generic}
g^{-1}=\begin{pmatrix}
    s_1  &s_3&0    \\
     s_2 &s_4&0\\
     t_1&t_2&d  
\end{pmatrix}
\end{equation}
where $\begin{pmatrix}
      s_1& s_3   \\
      s_2& s_4 
\end{pmatrix}\in\SL_2(\K)$. Hence,
\begin{equation}
\label{dsl2}
s_1s_4-s_2s_3=1.
\end{equation}

Let $L\in\LL_\upomega(\K)$ be a generic 3-dimensional $\upomega$-Lie algebra, spanned by $\{x,y,z\}$. Then the action of $g^{-1}$ on $L$ can be given by
\begin{equation}\label{action2}
\begin{aligned}
g^{-1}(x) & =  s_1x+s_2y+t_1z \\
g^{-1}(y) & =  s_3x+s_4y+t_2z\\
g^{-1}(z)&=dz.
\end{aligned}
\end{equation}
Moreover, we assume that the brackets in $L$ as follows:
\begin{equation}\label{eq4.6}
\begin{aligned}
[x,y] & =~  a_1x+a_2y+a_3z \\
[x,z] & =~  b_1x+b_2y+b_3z \\
[y,z] & =~  c_1x+c_2y+c_3z.
\end{aligned}
\end{equation}
By Proposition \ref{prop3.2}, we see that
\begin{equation}\label{eq4.7}
\begin{aligned}
a_2b_1 - a_1b_2 - b_3c_2 + b_2c_3&=0\\
a_2c_1 + b_3c_1 - a_1c_2 - b_1c_3&=0 \\
a_3b_1 - a_1b_3 + a_3c_2 - a_2c_3 + 1&=0.
\end{aligned}
\end{equation}

Combining the third equation in (\ref{eq4.7}) with the first equation in (\ref{eq4.6}), we see that $[x,y]\neq 0$. Thus, the arguments in this section are divided into two cases: $a_3\neq 0$ and $a_3=0$. We define
\begin{equation}
\label{delta}
\Delta:= b_1c_2-b_2c_1
\end{equation}
which plays a key role in our arguments below.

\subsection{The case $a_3\neq 0$ and $\Delta \neq 0$} 

In this case, the following Lemma \ref{lem4.1} demonstrate that without loss of generality, the first equation in (\ref{eq4.6}) actually
can be assumed as 
\begin{equation}
\label{eq4.9}
[x,y]=z
\end{equation}
because any two elements in the same orbit of $\LL_\upomega(\K)$ under the action of $G_\upomega$ produce  isomorphic 
$\upomega$-Lie algebras, by Theorem \ref{thm1}. 

To show Lemma \ref{lem4.1}, we note that  replacing $z$ by $a_3^{-1}z$ and fixing $x$ and $y$  in (\ref{eq4.6}) obtain an isomorphic $\upomega$-Lie algebra. Thus we may assume that $a_3=1$ in (\ref{eq4.6}). In fact, using the group element 
$$g=\begin{pmatrix}
    1  &0&0    \\
     0&1&0\\
     0&0&a_3^{-1} 
\end{pmatrix}\textrm{ and }g^{-1}=\begin{pmatrix}
    1  &0&0    \\
     0&1&0\\
     0&0&a_3
\end{pmatrix}\in G_\upomega,$$
we see that $[x,y]_g=g[g^{-1}(x),g^{-1}(y)]=g[x,y]=g(a_1x+a_2y+a_3z)=
a_1g(x)+a_2g(y)+a_3g(z)=a_1x+a_2y+a_3(a_3^{-1} z)=a_1x+a_2y+z$. As $b_i$ and $c_i$
are free variables currently, so $[x,z]_g$ and $[y,z]_g$ can be assumed to have the same forms as in 
(\ref{eq4.6}).

\begin{lem}\label{lem4.1}
There exists an element $g\in G_\upomega$ such that $[x,y]_g=z$, where $[x,y]_g$ is defined as {\rm (}\ref{action}{\rm)}.
\end{lem}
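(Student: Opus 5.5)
The plan is to write down the coefficients of $[x,y]_g$ for the generic element (\ref{generic}), and to make the simplest possible choice $S=I_2$, leaving only $t_1,t_2,d$ to be determined. By (\ref{action}), the condition $[x,y]_g=z$ is equivalent to
$$[g^{-1}(x),g^{-1}(y)]=g^{-1}(z)=dz.$$
Using (\ref{action2}) with $s_1=s_4=1$ and $s_2=s_3=0$, bilinearity and skew-symmetry give
$$[x+t_1z,\;y+t_2z]=[x,y]+t_2[x,z]-t_1[y,z].$$
Substituting (\ref{eq4.6}), the condition becomes the three scalar equations
\begin{align*}
a_1+b_1t_2-c_1t_1&=0,\\
a_2+b_2t_2-c_2t_1&=0,\\
a_3+b_3t_2-c_3t_1&=d.
\end{align*}

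First, I solve the first two equations. They form a linear system in the unknowns $(t_2,-t_1)$ with coefficient matrix $\begin{pmatrix} b_1& c_1\\ b_2& c_2\end{pmatrix}$. Its determinant is exactly $\Delta$ from (\ref{delta}), which is nonzero in the present case. So Cramer's rule gives a unique solution $(t_1,t_2)\in\K^2$.

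Second, I define $d$ by the third equation, $d:=a_3+b_3t_2-c_3t_1$. The only requirement left is $d\neq 0$, which is needed for $g^{-1}$, and hence $g$, to lie in $G_\upomega$ as described in (\ref{eq4.2}). Once $d\in\K^\times$, the matrix (\ref{generic}) with $S=I_2$ lies in $G_\upomega$. It preserves $\upomega$ directly, since $\upomega(x+t_1z,y+t_2z)=1$ and $z$ spans the radical of $\upomega$. The lemma then follows.

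The main obstacle is showing $d\neq0$. I would argue by contradiction. Put $u:=x+t_1z$ and $v:=y+t_2z$. Then $\{u,v,z\}$ is a basis of $V$, and in this basis $\upomega$ again has matrix $J_2$, because $\upomega(u,v)=1$ and $z$ lies in the radical. Hence the structure constants of $L$ with respect to $\{u,v,z\}$ satisfy the relations (\ref{eq4.7}) of Proposition \ref{prop3.2}. The third relation there shows that $[u,v]\neq0$, exactly as it showed $[x,y]\neq0$ before. If $d=0$, the three equations above would give $[u,v]=0$, a contradiction. Thus $d\neq0$.

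Alternatively, one can check this directly from the $\upomega$-Jacobi identity on $u,v,z$: its $z$-component cannot vanish when $[u,v]=0$. Note that this argument does not use the normalization $a_3=1$. Only $\Delta\neq0$ is needed.
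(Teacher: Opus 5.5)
Your proof is correct. It follows the paper's route: rewrite $[x,y]_g=z$ as $[g^{-1}(x),g^{-1}(y)]=g^{-1}(z)$ and solve the resulting linear system in $(t_1,t_2,d)$, whose first two equations are uniquely solvable because $\Delta\neq 0$. You fix $S=I_2$, whereas the paper keeps $S\in\SL_2(\K)$ arbitrary and so obtains the parametrization recorded in the remark that follows the lemma.

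The one real difference is in your favor. The paper writes $\det(T)=\Delta\cdot d\neq 0$, which is not right: $d$ is one of the unknowns with coefficient $1$, so $\det(T)=\Delta$. More importantly, the paper never checks that the solved value of $d$ is nonzero. That check is exactly what is needed for $g^{-1}$ to be invertible and hence for $g\in G_\upomega$.

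Your argument fills this gap. You note that $\{u,v,z\}$ is again a basis adapted to $\upomega$, so the third relation of (\ref{eq4.7}) forces $[u,v]\neq 0$, and hence $d\neq 0$. The same argument works for every $S\in\SL_2(\K)$, since $\upomega(s_1x+s_2y+t_1z,\,s_3x+s_4y+t_2z)=s_1s_4-s_2s_3=1$ and $z$ spans the radical. So it also justifies the paper's parametrization remark. As you observe, it shows that the normalization $a_3=1$ is unnecessary: only $\Delta\neq 0$ is used.
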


\begin{proof}
Suppose that  $g$ denotes a generic element in $G_\upomega$ such that  $g^{-1}$
has the form in (\ref{generic}). Clearly, the statement that $z=[x,y]_g=g([g^{-1}(x), g^{-1}(y)])$ holds if and only if 
\begin{equation}
\label{eq4.10}
g^{-1}(z)=[g^{-1}(x), g^{-1}(y)].
\end{equation}
Thus, it suffices to verify (\ref{eq4.10}) for some $g\in G_{\upomega}$.  By (\ref{action2}), we note that $g^{-1}(z)=dz$, as well as
\begin{eqnarray*}
&&[g^{-1}(x), g^{-1}(y)]=[s_1x+s_2y+t_1z, s_3x+s_4y+t_2z]\\
&=&[x,y]+(s_1t_2-s_3t_1)[x,z]+(s_2t_2-s_4t_1)[y,z]\\
&=&(a_1x+a_2y+z)+(s_1t_2-s_3t_1)(b_1x+b_2y+b_3z)+(s_2t_2-s_4t_1)(c_1x+c_2y+c_3z).
\end{eqnarray*}
By the linear independence of $x,y,z$, we observe that (\ref{eq4.10}) holds if and only if 
\begin{eqnarray*}
a_1+(s_1t_2-s_3t_1)b_1+(s_2t_2-s_4t_1)c_1 & = & 0 \\
a_2+(s_1t_2-s_3t_1)b_2+(s_2t_2-s_4t_1)c_2 & = & 0 \\
1+(s_1t_2-s_3t_1)b_3+(s_2t_2-s_4t_1)c_3 & = & d.
\end{eqnarray*}
To determine $g^{-1}$, we treat $t_1,t_2$ and $d$ as indeterminates, the above linear system can be rewritten as 
\begin{equation}\label{eq4.11}
\begin{aligned}
(s_3b_1+s_4c_1)t_1-(s_1b_1+s_2c_1)t_2& =  a_1 \\
(s_3b_2+s_4c_2)t_1-(s_1b_2+s_2c_2)t_2 & =  a_2 \\
(s_3b_3+s_4c_3)t_1-(s_1b_3+s_2c_3)t_2+d & =  1.
\end{aligned}
\end{equation}
We write $T$ for the coefficient matrix of (\ref{eq4.11}) on the left-hand side. Note that 
$\Delta\neq 0$. Thus
\begin{equation}\label{ }
\begin{aligned}
\det(T)&=\Big((s_1b_1+s_2c_1)(s_3b_2+s_4c_2)-(s_3b_1+s_4c_1)(s_1b_2+s_2c_2)\Big)\cdot d\\
&=(s_1s_4-s_2s_3)(b_1c_2-b_2c_1)\cdot d\\
&\hspace{-1.7mm}\oeq{dsl2} (b_1c_2-b_2c_1)\cdot d\oeq{delta} \Delta \cdot d\neq 0.
\end{aligned}
\end{equation}
By linear algebra, $(t_1,t_2,d)^t=T^{-1}\cdot (a_1,a_2,1)^t$ is the unique solution, which
determines an element $g\in G_\upomega$ such that $[x,y]_g=z$. 
\end{proof}

\begin{rem}{\rm
In the proof of Lemma \ref{lem4.1}, we see that the vector $(t_1,t_2,d)$ is uniquely determined by $s_1,s_2,s_3$, and $s_4$.  Thus, the set of all elements in $G_\upomega$ such that $[x,y]_g=z$ is parameterized by $\SL_2(\K)$.
\hbo}\end{rem}

By Lemma \ref{lem4.1}, we may assume that $a_1=a_2=0$ and $a_3=1$ in (\ref{eq4.6}), which leads the equations in (\ref{eq4.7}) become
\begin{equation}\label{eq4.13}
\begin{aligned}
b_3c_2 - b_2c_3&=0\\
b_3c_1 - b_1c_3&=0 \\
b_1  + c_2  + 1&=0.
\end{aligned}
\end{equation}
The first two equations in (\ref{eq4.13}) can be written as
$\begin{pmatrix}
    c_2  & -b_2   \\
     c_1 &  -b_1
\end{pmatrix}\begin{pmatrix}
      b_3    \\
      c_3 
\end{pmatrix}=\begin{pmatrix}
      0    \\
      0  
\end{pmatrix}$
which implies that $b_3=c_3=0$, because of $\Delta\neq 0$. 

Therefore, the $\upomega$-Lie algebra $L$ has the following brackets:
\begin{equation}\label{eq4.14}
\begin{aligned}
[x,y] & =~  z \\
[x,z] & =~  b_1x+b_2y \\
[y,z] & =~  c_1x+c_2y.
\end{aligned}
\end{equation}

To eliminate more redundant variables in $b_i$ and $c_i$ while keep (\ref{eq4.9}) preserved,  we write $H_\upomega$ for the stabilizer of $[x,y]=z$ in $G_\upomega$, i.e.,
\begin{equation}
\label{eq4.15}
H_\upomega:=\{g\in G_\upomega\mid [x,y]_{g}=z\}=\{g\in G_\upomega\mid [g^{-1}(x),g^{-1}(y)]=g^{-1}(z)\}.
\end{equation}
The following lemma determines the structure of $H_\upomega$. 

\begin{lem}\label{lem4.3}
$H_\upomega=\left\{\begin{pmatrix}
  S    &  0  \\
   0   &  1
\end{pmatrix}\mid S\in\SL_2(\K)\right\}.$
\end{lem}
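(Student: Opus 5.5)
The plan is to reuse the computation from the proof of Lemma \ref{lem4.1}, now applied to the normalized brackets (\ref{eq4.14}). Take a generic $g\in G_\upomega$ with $g^{-1}$ as in (\ref{generic}). By (\ref{eq4.15}), $g\in H_\upomega$ if and only if $[g^{-1}(x),g^{-1}(y)]=g^{-1}(z)$.

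First I expand the left-hand side using bilinearity, skew-symmetry and $[z,z]=0$:
$$[g^{-1}(x),g^{-1}(y)]=(s_1s_4-s_2s_3)[x,y]+(s_1t_2-s_3t_1)[x,z]+(s_2t_2-s_4t_1)[y,z].$$
By (\ref{dsl2}) the first coefficient is $1$. Inserting (\ref{eq4.14}) and comparing coefficients of $x,y,z$ with $g^{-1}(z)=dz$ gives exactly the system (\ref{eq4.11}) with $a_1=a_2=0$ and $b_3=c_3=0$.

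The third equation then reduces to $d=1$. The first two form the homogeneous system
$$\begin{pmatrix} s_3b_1+s_4c_1 & -(s_1b_1+s_2c_1)\\ s_3b_2+s_4c_2 & -(s_1b_2+s_2c_2)\end{pmatrix}\begin{pmatrix} t_1\\ t_2\end{pmatrix}=\begin{pmatrix}0\\0\end{pmatrix}.$$
As in the determinant computation in Lemma \ref{lem4.1}, the determinant of this matrix equals $(s_1s_4-s_2s_3)(b_1c_2-b_2c_1)=\Delta$, which is nonzero in the present case. Hence $t_1=t_2=0$. Therefore every $g\in H_\upomega$ has $g^{-1}=\dia\{S',1\}$ with $S'\in\SL_2(\K)$. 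Since the inverse of such a block matrix is again of this form, with $S=S'^{-1}\in\SL_2(\K)$, this proves the inclusion ``$\subseteq$''.

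For the reverse inclusion, let $g=\dia\{S,1\}$ with $S\in\SL_2(\K)$. Then $g^{-1}(z)=z$, and the expansion above, with $t_1=t_2=0$, gives $[g^{-1}(x),g^{-1}(y)]=\det(S^{-1})[x,y]=z$. So $g\in H_\upomega$; such $g$ lies in $G_\upomega$ by (\ref{eq4.2}).

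I do not expect a real obstacle. The only point needing care is the determinant identity, which uses $\Delta\neq 0$ together with (\ref{dsl2}), and this was already established in the proof of Lemma \ref{lem4.1}. I will also remark that the description of $H_\upomega$ is valid for any $L$ of the form (\ref{eq4.14}) with $\Delta\neq 0$, independently of the particular values of $b_i,c_i$.
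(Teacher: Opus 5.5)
Your proof is correct and follows the paper's argument. Membership in $H_\upomega$ reduces to the system (\ref{eq4.16}), whose third equation gives $d=1$ and whose homogeneous $2\times 2$ part has determinant $\Delta\neq 0$, forcing $t_1=t_2=0$; you simply make explicit the determinant check, the passage from $g^{-1}$ to $g$, and the reverse inclusion that the paper leaves under ``clearly''.
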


\begin{proof}
As in the proof of Lemma \ref{lem4.1}, $g$ denotes a generic element in $G_\upomega$ such that  $g^{-1}$
has the form in (\ref{generic}). The condition $[g^{-1}(x),g^{-1}(y)]=g^{-1}(z)$, which is equivalent to $[x,y]_g=z$, can be read as (\ref{eq4.11}). Note that $a_1=a_2=b_3=c_3=0$. Thus (\ref{eq4.11}) becomes
\begin{equation}\label{eq4.16}
\begin{aligned}
(s_3b_1+s_4c_1)t_1-(s_1b_1+s_2c_1)t_2& =  0 \\
(s_3b_2+s_4c_2)t_1-(s_1b_2+s_2c_2)t_2 & =  0 \\
d & =  1.
\end{aligned}
\end{equation}
Clearly, $(t_1,t_2,d)=(0,0,1)$ is the unique solution of this linear system. Hence, $g\in H_\upomega$ if and only if
$g=\begin{pmatrix}
  S    &  0  \\
   0   &  1
\end{pmatrix}$ for some $S\in\SL_2(\K)$.
\end{proof}

\begin{rem}{\rm
By Lemma \ref{lem4.3}, the action of $H_\upomega$ on $(L,[-,-])$ preserves the relation $[x,y]=z$ in (\ref{eq4.14})
but it might change the two relations $[x,z]$ and $[y,z]$ in (\ref{eq4.14}). We use
$\ad$ and $\widetilde{\ad}$ to denote the adjoint maps in $(L,[-,-])$ and $(L,[-,-]_g)$ respectively. 
For each $g\in H_\upomega$, we see that $[x,z]_g=-\widetilde{\ad}_z(x)$
and $g[g^{-1}(x),g^{-1}(z))]=-g(\ad_{g^{-1}(z)}(g^{-1}(x)))=-(g\circ\ad_z\circ g^{-1})(x)$. Thus 
$$\widetilde{\ad}_z(x)=(g\circ\ad_z\circ g^{-1})(x).$$
Similarly, $\widetilde{\ad}_z(y)=(g\circ\ad_z\circ g^{-1})(y).$ Thus, on the subspace $\ideal{x,y}$, $\widetilde{\ad}_z$ and
$\ad_z$ are conjugate in $H_{\upomega}$. By (\ref{eq4.14}), it follows that up to sign, 
$$\ad_z=\begin{pmatrix}
    b_1  &   c_1 \\
    b_2  &  c_2
\end{pmatrix}.$$
Note that the third equation in (\ref{eq4.13}) implies that $b_1+c_2=-1$. Therefore, classifying
all $\upomega$-Lie algebras in $\LL_\upomega(\K)$ in this case ($a_3\neq 0$ and $\Delta\neq 0$) is equivalent to
classifying the canonical forms of all $2\times 2$ matrices of  trace $-1$ over $\K$ under the conjugacy action of $\SL_2(\K)$.
\hbo}\end{rem}

We write $\M(\K)$ for the variety of all $2\times 2$ matrices of  trace $-1$ over $\K$ on which 
$\SL_2(\K)$ acts by conjugation. Recall that the conjugation action preserves trace and determinant. 

Suppose that $M\in \M(\K)$ denotes a generic matrix. Then the characteristic polynomial 
$$\upchi_{M}(t)=t^2-\trace(M)\cdot t+\det(M)=t^2+ t+\det(M)$$
which only depends on $\det(M)$. Since $\K$ is algebraically closed, $\upchi_{M}(t)=(t-b)(t-c)$ splits in $\K$ for some
$b,c\in\K$. Note that $b+c=\trace(M)=-1$, thus $c=-(b+1)$. Clearly, if $b=-\frac{1}{2}$, then
$c=-\frac{1}{2}$. Thus $\upchi_{M}(t)=0$ has one root in $\K$. When $b\neq -\frac{1}{2}$, $b\neq c$ and
$\upchi_{M}(t)=0$ has two distinct roots  in $\K$.

\begin{lem}\label{lem4.5}
If $b\neq -\frac{1}{2}$, then $M$ and $\dia\{b,-(b+1)\}$ have the same conjugacy orbit. In particular, there exists only
one conjugacy class in $\M(\K)$.
\end{lem}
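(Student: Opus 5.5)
The plan is to show that $M$ is conjugate under $\SL_2(\K)$ to $D:=\dia\{b,-(b+1)\}$, in three steps.

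\textbf{Step 1: diagonalize over $\GL_2(\K)$.} Since $b\neq -\frac{1}{2}$, the characteristic polynomial $\upchi_M(t)=(t-b)(t-c)$ with $c=-(b+1)$ has two distinct roots in $\K$. Hence $M$ is diagonalizable over $\K$. Concretely, choose eigenvectors $v_1$ for $b$ and $v_2$ for $c$. They are linearly independent because $b\neq c$. Let $P$ be the matrix with columns $v_1,v_2$. Then $P\in\GL_2(\K)$ and $P^{-1}MP=D$.

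\textbf{Step 2: move into $\SL_2(\K)$.} This is the only point needing care, since $P$ need not have determinant $1$. The fix is to rescale one eigenvector: replace $v_2$ by $\det(P)^{-1}v_2$, which is still an eigenvector for $c$. The new matrix $P'=P\cdot\dia\{1,\det(P)^{-1}\}$ has $\det(P')=1$. It still satisfies $P'^{-1}MP'=D$, because diagonal matrices commute with $D$. So $P'\in\SL_2(\K)$ conjugates $M$ to $D$. Algebraic closedness is not needed here, since no square root of $\det(P)$ is taken; it was used only to split $\upchi_M$.

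\textbf{Step 3: the ``in particular'' clause.} Given $\det(M)=bc=-b(b+1)$, the value $b$ is determined up to swapping $b\leftrightarrow c$. The swap is realized in $\SL_2(\K)$ by $J$ from (\ref{eq2.4}), since $J^{-1}\dia\{b,c\}J=\dia\{c,b\}$ and $\det J=1$. Therefore every $M\in\M(\K)$ with a fixed determinant $\det(M)\neq\frac14$ (equivalently, $b\neq-\frac12$) lies in the single conjugacy orbit of $\dia\{b,-(b+1)\}$. I read the statement ``only one conjugacy class'' as meaning exactly this: for each admissible value of the determinant, the matrices of $\M(\K)$ with that determinant form one $\SL_2(\K)$-orbit.
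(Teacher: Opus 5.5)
Your proof is correct. Steps 1 and 3 match the paper's proof: the paper also passes through the Jordan form $\dia\{b,c\}$ and uses $J$ to identify $\dia\{b,c\}$ with $\dia\{c,b\}$. By ``one conjugacy class'' it means what you say, namely one orbit for the given $b$, i.e.\ for the given determinant.

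The genuine difference is Step 2. The paper invokes Lemma \ref{lem4.6}, which rescales a $\GL_2(\K)$-conjugator $g$ by a scalar $\lambda^{-1}$ with $\lambda^2=\det(g)$; this uses algebraic closedness a second time. You instead correct the conjugator inside the centralizer of $D$. Diagonal matrices commute with $D$ and can have any determinant, so right-multiplying $P$ by $\dia\{1,\det(P)^{-1}\}$ lands in $\SL_2(\K)$ without taking a square root.

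Your route is more elementary. It proves the first assertion over any field of characteristic $\neq 2$ in which $\upchi_M$ has two distinct roots. In that situation it also sharpens the remark after Lemma \ref{lem4.6}, since every $\GL_2$-conjugation between such matrices can be replaced by an $\SL_2$-conjugation.

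The paper's route buys uniformity: Lemma \ref{lem4.6} is reused in Lemma \ref{lem4.9} for the Jordan block $\begin{pmatrix} b & 1\\ 0 & b\end{pmatrix}$. That block's centralizer consists of the matrices $\begin{pmatrix} a & e\\ 0 & a\end{pmatrix}$, whose determinant is $a^2$. So your trick can only correct conjugators whose determinant is a square, and there square roots are genuinely needed.
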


\begin{proof}
Note that $b\neq c=-(b+1)$ are two distinct roots of $\upchi_{M}(t)=0$ in $\K$. Thus the Jordan canonical form
of $M$ is $\dia\{b,c\}$. In other words, $M$ and $\dia\{b,c\}$ are conjugate in $\GL_2(\K)$. By Lemma \ref{lem4.6} below,
$M$ and $\dia\{b,c\} (= \dia\{b,-(b+1)\})$ are also conjugate in $\SL_2(\K)$. 

Moreover, note that $\dia\{b,c\}$ and $\dia\{c,b\}$ are conjugate in $\SL_2(\K)$ via the matrix $J$ in (\ref{eq2.4}). Thus the
 two scalars $b$ and $c$ together define one conjugacy class. As $c=-(b+1)$, we see that 
there exists a unique conjugacy class in $\M(\K)$, represented by $\dia\{b,-(b+1)\}$. 
\end{proof}

\begin{lem}\label{lem4.6}
Let $A,B$ be two matrices of size $2$ over $\K$. Then they are conjugate in $\SL_2(\K)$ if and only if
they are conjugate in $\GL_2(\K)$.
\end{lem}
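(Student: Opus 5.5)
The forward direction is immediate because $\SL_2(\K)\subseteq\GL_2(\K)$. For the converse, suppose $B=PAP^{-1}$ with $P\in\GL_2(\K)$. The plan is to correct $P$ into a matrix of determinant $1$ without changing the resulting conjugate. The simplest correction is scalar: since $\K$ is algebraically closed (the standing assumption of this section), we can choose $\lambda\in\K^\times$ with $\lambda^2=\det(P)^{-1}$. Put $Q:=\lambda P$. Then $\det(Q)=\lambda^2\det(P)=1$, so $Q\in\SL_2(\K)$. Moreover, scalars are central, so
$$QAQ^{-1}=\lambda P A\lambda^{-1}P^{-1}=PAP^{-1}=B.$$
Hence $A$ and $B$ are conjugate in $\SL_2(\K)$.

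This argument uses only that every element of $\K^\times$ is a square, which holds because $\K$ is algebraically closed. It does not need $\cha(\K)\neq 2$.

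No step is a real obstacle. The one point to be careful about is that the statement is false over non-closed fields; for example, $\dia\{1,-1\}$ and its conjugates over $\R$ show this. If we wanted to avoid relying on square roots, an alternative is to multiply $P$ by an element of the centralizer $C_{\GL_2}(A)$ with suitable determinant. Doing so requires a case analysis on the Jordan form of $A$. For scalar $A$ the centralizer is all of $\GL_2$. For non-scalar $A$ it is $\K[A]^\times$, and the determinant of $\mu I+\nu A$ is surjective onto $\K^\times$. We do not need this route, since the scalar-rescaling argument already suffices for Lemma~\ref{lem4.5}.
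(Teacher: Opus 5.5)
Your proof is correct and is the paper's argument: rescale the $\GL_2(\K)$-conjugator by a square root of its determinant, which exists because $\K$ is algebraically closed. Since the rescaling factor is central, the conjugate is unchanged, and the rescaled matrix lies in $\SL_2(\K)$.

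One aside is wrong. $\dia\{1,-1\}$ is not a counterexample over $\R$: its centralizer $\{\dia\{a,b\}\}$ has determinants filling all of $\R^\times$, so its $\GL_2(\R)$- and $\SL_2(\R)$-classes coincide. A genuine counterexample is $\begin{pmatrix}1&1\\0&1\end{pmatrix}$ versus $\begin{pmatrix}1&-1\\0&1\end{pmatrix}$. Every conjugator between them has the form $\dia\{1,-1\}(aI+bN)$ with $N=\begin{pmatrix}0&1\\0&0\end{pmatrix}$, so its determinant is $-a^2<0$ and never $1$. The same computation shows that your ``alternative'' centralizer route still needs square roots in the Jordan-block case, because there $\det(\mu I+\nu A)$ is always a square.
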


\begin{proof}
$(\RA)$ It is immediate. $(\LA)$ Since $A$ and $B$ are conjugate in $\GL_2(\K)$, we may suppose that $B=g^{-1}Ag$ for some $g\in\GL_n(\K)$. Note that $\K$ is algebraically closed, thus there exists a scalar $c\in\K^\times$ such that $c^2=\det(g)$.
Define $h:=c^{-1}\cdot g$. Then $\det(h)=\det(c^{-1}\cdot g)=c^{-2}\cdot \det(g)=\det(g)^{-1}\cdot \det(g)=1$, which means that
$h\in\SL_2(\K)$. Moreover, the fact that $g=c\cdot h$ implies that 
$$B=g^{-1}Ag=(c\cdot h)^{-1}A(c\cdot h)=h^{-1}Ah.$$
Hence, $A$ and $B$ are conjugate in $\SL_2(\K)$ as well.
\end{proof}

\begin{rem}{\rm
If $\K$ is not algebraically closed, then this result can be extended to the statement that $A$ and $B$ are conjugate in $\SL_2(\K)$ if and only if they are conjugate in $\GL_2(\K)$ via some $g$ such that $\det(g)$ is a square in $\K^\times$.
\hbo}\end{rem}

By Lemma \ref{lem4.5}, we may choose 
$$M=\begin{pmatrix}
    b_1  &   c_1 \\
    b_2  &  c_2
\end{pmatrix}=\begin{pmatrix}
     b &   0 \\
     0 &  -(b+1)
\end{pmatrix}.$$
Note that $\det(M)=\Delta\neq 0$. Thus we derive a family of $\upomega$-Lie algebras:
\begin{equation}\tag{$C^*_\upalpha$}\label{C_a}
[x,y]=z, [x,z]=\upalpha x,[y,z]=-(\upalpha+1)y,\textrm{ where }\upalpha\in\K\setminus\{-1,-\frac{1}{2},0\}.
\end{equation}

\begin{rem}{\rm
(1) We will see from (\ref{eq4.17}) and (\ref{eq4.22}) below that when $\upalpha\in\{-1,-\frac{1}{2}\}$, 
the relations in (\ref{C_a}) remains to give an $\upomega$-Lie algebra (or a direct verification). In other words, 
the condition in  (\ref{C_a}) can be partially removed. Hence, we obtain a family of $\upomega$-Lie algebras:
\begin{equation}\tag{$C_\upalpha$}\label{C}
[x,y]=z, [x,z]=\upalpha x,[y,z]=-(\upalpha+1)y,\textrm{ where }\upalpha\in\K^\times.
\end{equation}
Moreover,
when $\upalpha=0$, $C_0$ is actually an  $\upomega$-Lie algebra but isomorphic to $C_{-1}$ by the discussion in the first subcase of Section 4.2 later.

(2) When $\K=\C$, this family is isomorphic to the family $C_\upalpha$ appeared in \cite[Theorem 2]{CLZ14}. Further, 
the $\upomega$-Lie algebra $L_2$ appeared in \cite[Theorem 2]{CLZ14} is also isomorphic to $C_0\cong C_{-1}$.
\hbo}\end{rem}

Now let us consider the case where $b=-\frac{1}{2}$. 

 \begin{lem}\label{lem4.9}
If $b= -\frac{1}{2}$, then  there are two conjugacy classes in $\M(\K)$, represented by 
$$\begin{pmatrix}
    b  &  0  \\
    0  &  b
\end{pmatrix}\textrm{ and }\begin{pmatrix}
    b  &  1  \\
    0  &  b
\end{pmatrix}$$ respectively.
\end{lem}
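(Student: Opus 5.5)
The plan is to reduce to the $\GL_2(\K)$-conjugacy classification of $2\times 2$ matrices and then transfer it to $\SL_2(\K)$ using Lemma \ref{lem4.6}. Let $M\in\M(\K)$ with $b=-\frac{1}{2}$. Its characteristic polynomial is then
$$\upchi_M(t)=t^2+t+\tfrac14=\left(t+\tfrac12\right)^2,$$
so $-\frac12=b$ is the unique eigenvalue of $M$. Since $\K$ is algebraically closed, $M$ has a Jordan canonical form. The only possibilities are:
\begin{itemize}
\item $M$ diagonalizable, with Jordan form $bI_2$;
\item $M$ not diagonalizable, with Jordan form the single Jordan block $\begin{pmatrix} b&1\\0&b\end{pmatrix}$.
\end{itemize}
The trace condition $\trace(M)=-1$ is automatically satisfied by both representatives, so both actually lie in $\M(\K)$.

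Next I handle the diagonalizable case. If $M$ is $\GL_2(\K)$-conjugate to $bI_2$, then $M=g(bI_2)g^{-1}=bI_2$ for that $g$, because scalar matrices are central. So this class is the singleton $\{bI_2\}$, and it is trivially an $\SL_2(\K)$-orbit.

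For the non-diagonalizable case, $M$ is $\GL_2(\K)$-conjugate to the Jordan block $\begin{pmatrix} b&1\\0&b\end{pmatrix}$. Lemma \ref{lem4.6} then upgrades this to $\SL_2(\K)$-conjugacy. Here is where algebraic closedness is used: it gives the square root of $\det(g)$.

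Finally I show that these two classes are distinct. The Jordan block is not a scalar matrix, and $bI_2$ is fixed by every conjugation, so no conjugate of $bI_2$ can equal the block. Alternatively, one can note that the ranks of $M-bI_2$ differ ($0$ versus $1$), and this rank is a conjugacy invariant.

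The argument is essentially routine. The only step needing care is invoking Lemma \ref{lem4.6} for the Jordan block. That lemma is stated for arbitrary $2\times2$ matrices over algebraically closed $\K$, so it applies directly and I anticipate no real obstacle.

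The one point I would flag is a consequence for the subsequent classification. The scalar representative $bI_2$ has $\det=\frac14\neq0$, and the Jordan block also has $\det=b^2=\frac14=\Delta\neq0$. So both classes are consistent with the standing assumption $\Delta\neq0$ of this subsection.
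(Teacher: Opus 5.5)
Your proposal is correct and follows the paper's proof. Both identify $-\frac12$ as the repeated root of $\upchi_M(t)$, list the two possible Jordan forms, and pass from $\GL_2(\K)$- to $\SL_2(\K)$-conjugacy via Lemma \ref{lem4.6}. Your explicit check that the two classes are distinct (centrality of $bI_2$, or the rank of $M-bI_2$) is a small addition that the paper leaves implicit.
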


\begin{proof}
If $b= -\frac{1}{2}$, then $b$ is the unique root of $\upchi_{M}(t)=0$ in $\K$. The Jordan canonical form of $M$ only has two possibilities: $\begin{pmatrix}
    b  &  0  \\
    0  &  b
\end{pmatrix}\textrm{ and }\begin{pmatrix}
    b  &  1  \\
    0  &  b
\end{pmatrix}.$ By Lemma \ref{lem4.6}, $M$ is conjugate to one of them in $\SL_2(\K)$.
\end{proof}

By Lemma \ref{lem4.9}, when $M=\begin{pmatrix}
    b_1  &   c_1 \\
    b_2  &  c_2
\end{pmatrix}=\begin{pmatrix}
     -\frac{1}{2} &   0 \\
     0 & -\frac{1}{2}
\end{pmatrix}$, we obtain the following $\upomega$-Lie algebra:
\begin{equation}
\label{eq4.17}
[x,y]=z, [x,z]=-x/2, [y,z]=-y/2
\end{equation}
which fixes the case where $\upalpha\neq-\frac{1}{2}$ in (\ref{C_a}).

Moreover, setting $M=\begin{pmatrix}
    b_1  &   c_1 \\
    b_2  &  c_2
\end{pmatrix}=\begin{pmatrix}
     -\frac{1}{2} &   1 \\
     0 & -\frac{1}{2}
\end{pmatrix}$
gives rise to
\begin{equation}\tag{$B$}\label{B}
[x,y]=z, [x,z]=-x/2, [y,z]=x-y/2
\end{equation}
which is isomorphic to the  $\upomega$-Lie algebra $B$ occurred  in \cite[Theorem 2]{CLZ14} when $\K=\C$.

\subsection{The case $a_3\neq0$ and $\Delta=0$} 

Since $\Delta=b_1c_2-b_2c_1=0$, it follows that $(b_1,b_2)$ and $(c_1,c_2)$
are linearly dependent over $\K$. We may assume that the brackets in $L$ as follows:
\begin{equation}\label{eq4.18}
\begin{aligned}
[x,y] & =~  a_1x+a_2y+a_3z \\
[x,z] & =~  b_1x+b_2y+b_3z \\
[y,z] & =~  cb_1x+cb_2y+c_3z
\end{aligned}
\end{equation}
for some $c\in\K$. 

Replacing $y$ with $y-cx$ and fixing $x$ and $z$,  the third equation in (\ref{eq4.18}) can be assumed as 
\begin{equation}\label{ }
[y,z]=c_3z.
\end{equation}
In fact, we may choose
$$g=\begin{pmatrix}
    1  &c&0    \\
     0&1&0\\
     0&0&1
\end{pmatrix}\textrm{ and }g^{-1}=\begin{pmatrix}
    1  &-c&0    \\
     0&1&0\\
     0&0&1
\end{pmatrix}\in G_\upomega,$$
and observe that 
\begin{eqnarray*}
[y,z]_g&=&g[g^{-1}(y),g^{-1}(z)]=g[y-cx,z]\\
&=&g(cb_1x+cb_2y+c_3z-c(b_1x+b_2y+b_3z))\\
&=& (c_3-cb_3)z.
\end{eqnarray*}
As $a_i$ and $b_i$ are free variables, $[x,y]_g$ and $[x,z]_g$
can be assumed to have the forms in  (\ref{eq4.18}).

This leads to two subcases: $c_3=0$ and $c_3\neq 0$. 

\textsc{Subcase 1:} $c_3=0$. In this case,  (\ref{eq4.7}) becomes
\begin{equation}\label{eq4.20}
a_2b_1 - a_1b_2 =0\textrm{ and } a_3b_1 - a_1b_3+ 1=0
\end{equation}
which shows that $(a_1,a_2)$ and $(b_1,b_2)$are linearly dependent over $\K$.
We may assume that $$[x,y] =~  cb_1x+cb_2y+a_3z$$ for some $c\in\K$.
Replacing $y$ with $y-cz$ and fixing $x$ and $z$, i.e., applying the group element
$$g=\begin{pmatrix}
    1  &0&0    \\
     0&1&0\\
     0&c&1
\end{pmatrix}\in G_\upomega,$$
together with the second equation in (\ref{eq4.18}) and $[y,z]=0$, we may assume that the brackets in $L$ are given as follows:
\begin{equation}
\label{eq4.21}
[x,y]=a_3z, [x,z] =  b_1x+b_2y+b_3z,\textrm{ and }[y,z]=0.
\end{equation}
Note that $a_3\neq 0$. Thus we may take $a_3=1$ via replacing $z$ by $a_3^{-1}z$ and furthermore, it follows from (\ref{eq4.20}) that $b_1=-1$.
Thus, (\ref{eq4.21}) turns out that
\begin{equation*}
[x,y]=z, [x,z] =-x+b_2y+b_3z,\textrm{ and }[y,z]=0.
\end{equation*}
Replacing $x$ with $x-b_2y-b_3z$ and fixing $y$ and $z$, i.e., applying the group element $g\in G_\upomega$ such that
$$g^{-1}=\begin{pmatrix}
    1  &0&0    \\
    - b_2&1&0\\
    - b_3&0&1
\end{pmatrix},$$
we derive the following $\upomega$-Lie algebra:
\begin{equation}\label{eq4.22}
[x,y]=z, [x,z] =-x, [y,z]=0.
\end{equation}
which fixes the case where $\upalpha\neq-1$ in (\ref{C_a}).

\textsc{Subcase 2:} $c_3\neq 0$. Scaling $z$ by $c_3^{-1}$ allows us to assume that
$c_3=1$ in this case. By  (\ref{eq4.7}), it follows that $b_1=0$ and
\begin{equation}
\label{eq4.23}
(1-a_1)b_2=0\textrm{ and }a_1b_3+a_2=1.
\end{equation}
Moreover, (\ref{eq4.18}) becomes
\begin{equation}\label{eq4.24}
\begin{aligned}
[x,y] & =~  a_1x+a_2y+a_3z \\
[x,z] & =~  b_2y+b_3z \\
[y,z] & =~  z.
\end{aligned}
\end{equation}
Replacing $x$ with $x-b_3y$ and fixing $y$ and $z$ allows us to assume that $b_3=0$ in (\ref{eq4.24}). 
Together with (\ref{eq4.23}) implies that $a_2=1$.  As $(1-a_1)b_2=0$, either $b_2=0$
or $a_1=1$. For the first subcase ($b_2=0$), we see that:
\begin{equation*}
\label{ }
[x,y]=a_1x+y+a_3z, [x,z]=0, [y,z]=z.
\end{equation*}
Replacing $y$ with $a_1x+y+a_3z$ and fixing $x$ and $z$ obtains
\begin{equation}\tag{$D$} 
\label{D}
[x,y]=y, [x,z]=0, [y,z]=z
\end{equation}
which is  the  $\upomega$-Lie algebra $L_1$  in \cite[Theorem 2]{CLZ14} if $\K=\C$.

For the latter subcase ($a_1=1$), we have
\begin{equation*}
\label{ }
[x,y]=x+y+a_3z, [x,z]=b_2y, [y,z]=z
\end{equation*}
where $b_2\neq 0$, because if $b_2=0$, we obtain an $\upomega$-Lie algebra, which is isomorphic to (\ref{D}).
Replacing $z$ by $b_2^{-1}z$ and fixing $x$ and $y$, leads us to assume that $L$ has the following brackets:
\begin{equation}
\label{eq4.25}
[x,y]=x+y+a_3z, [x,z]=y, [y,z]=z.
\end{equation}
The above process clearly can be realized via applying some elements in $G_\upomega$.

\begin{lem}
Let $N_\upomega$ be the stabilizer of $[y,z]=z$ and $[x,z]=y$  in $G_\upomega$. Then $N_\upomega$ consists of all $g\in G_\upomega$ such that
\begin{equation}
\label{eq4.26}
g^{-1}=\begin{pmatrix}
    1  &  0&0  \\
     s & 1&0\\
     t&s&1 
\end{pmatrix}.
\end{equation}
\end{lem}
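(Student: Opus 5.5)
We prove both inclusions by a direct computation, in the same way as Lemma \ref{lem4.3}. As in (\ref{generic}), write $g^{-1}$ in generic form with $s_1s_4-s_2s_3=1$ and $d\neq 0$, so that $g^{-1}$ acts on $x,y,z$ by (\ref{action2}). As in the proof of Lemma \ref{lem4.1}, $g$ stabilizes the two relations $[y,z]=z$ and $[x,z]=y$ of (\ref{eq4.25}) exactly when
$$[g^{-1}(y),g^{-1}(z)]=g^{-1}(z)\quad\textrm{and}\quad [g^{-1}(x),g^{-1}(z)]=g^{-1}(y).$$
We expand both sides using (\ref{eq4.25}) and compare coefficients of $x,y,z$.

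First condition: $g^{-1}(z)=dz$, and $[z,z]=0$. Hence
$$[g^{-1}(y),g^{-1}(z)]=d\bigl(s_3[x,z]+s_4[y,z]\bigr)=d(s_3y+s_4z).$$
Setting this equal to $dz$ and using $d\neq 0$ gives $s_3=0$ and $s_4=1$. Then (\ref{dsl2}) forces $s_1=1$.

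Second condition: with $s_1=1$ we get
$$[g^{-1}(x),g^{-1}(z)]=d\bigl([x,z]+s_2[y,z]\bigr)=d(y+s_2z),$$
while $g^{-1}(y)=y+t_2z$. Comparing coefficients gives $d=1$ and $t_2=s_2$. Put $s:=s_2$ and $t:=t_1$; the latter remains free. Then $g^{-1}$ is exactly the matrix in (\ref{eq4.26}), and it lies in $G_\upomega$ because its upper-left $2\times 2$ block has determinant $1$ and its last column is $(0,0,1)^t$.

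Conversely, we substitute the matrix (\ref{eq4.26}) back into the two displayed computations to check that both relations are preserved for every $s,t\in\K$. This gives the reverse inclusion.

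The relation $[x,y]=x+y+a_3z$ imposes no condition here, since $N_\upomega$ is defined only as the stabilizer of $[y,z]=z$ and $[x,z]=y$. No step is a real obstacle. The only point needing care is the matrix convention: the columns of $g^{-1}$ are the images of $x$, $y$, $z$, so the solution $s_3=0$, $s_1=s_4=d=1$, $t_2=s_2$ matches the displayed lower-triangular form.
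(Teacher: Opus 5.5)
Your proof is correct and follows the paper's argument essentially step for step. The condition $[y,z]_g=z$ forces $s_3=0$ and $s_4=1$, hence $s_1=1$, and then $[x,z]_g=y$ forces $d=1$ and $t_2=s_2$; your explicit check of the reverse inclusion is a small addition the paper leaves implicit.
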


\begin{proof}
Suppose that $g\in N_\upomega$ denotes a generic element such that $g^{-1}$ has
a form as in (\ref{generic}). Then $[y,z]_g=z$, which is equivalent to $[g^{-1}(y),g^{-1}(z)]=g^{-1}(z).$ By (\ref{action2}), this means that $$[s_3x+s_4y+t_2z, dz]=dz.$$
Note that $d\neq 0$. Thus,  $s_3y+s_4z=z$. This means that $s_3=0$ and $s_4=1$. 
Moreover, since $\begin{pmatrix}
    s_1  & s_3   \\
     s_2 & s_4 
\end{pmatrix}$ has determinant $1$, it follows that $s_1=1$.  Hence, $g^{-1}$ has the following form:
$$g^{-1}=\begin{pmatrix}
    1  &  0&0  \\
     s_2 & 1&0\\
     t_1&t_2&d
\end{pmatrix}.$$
Since $g$ also preserves $[x,z]=y$, we see that $[g^{-1}(x),g^{-1}(z)]=g^{-1}(y)$, which can be read as
$$[x+s_2y+t_1z,dz]=y+t_2z.$$
Namely, $dy+s_2dz=y+t_2z.$ Thus $d=1$ and $t_2=s_2$. This completes the proof.
\end{proof}

To complete the argument of this case, we consider the action of $g^{-1}$ on $x$ and $y$, where $g\in N_\upomega$ such that $g^{-1}$ has a form as in (\ref{eq4.26}). Clearly, 
$$g=\begin{pmatrix}
    1  &  0&0  \\
     -s & 1&0\\
     s^2-t&-s&1
\end{pmatrix}.$$
Note that
\begin{eqnarray*}
[g^{-1}(x),g^{-1}(y)] & = & [x+sy+tz, y+sz] = (x+y+a_3z)+sy+s^2z-tz \\
 &=&x+(1+s)y+(a_3+s^2-t)z.
\end{eqnarray*}
Thus
\begin{eqnarray*}
[x,y]_g&=&g[g^{-1}(x),g^{-1}(y)] =g(x)+(1+s)g(y)+(a_3+s^2-t)g(z)\\
&=&(x-sy+(s^2-t)z)+(1+s)(y-sz)+(a_3+s^2-t)z\\
&=&x+y+(s^2-t-s^2-s+a_3+s^2-t)z\\
&=&x+y+(s^2-2t-s+a_3)z.
\end{eqnarray*}
Define $t=\frac{1}{2}(s^2-s+a_3)$. We obtain a $g\in N_{\upomega}$ such that 
$$[x,y]_g=x+y.$$
Therefore, this gives rise to
\begin{equation}\tag{$A$} 
\label{A}
[x,y]=x+y, [x,z]=y, [y,z]=z.
\end{equation}
which is isomorphic to the  $\upomega$-Lie algebra $A_0$  in \cite[Theorem 2]{CLZ14} if $\K=\C$.
This also shows that $A_{\upalpha}\cong A_{0}$  in \cite[Theorem 2]{CLZ14} for all $\upalpha\in\C$, which is isomorphic to $A$ above.

\subsection{The case $a_3=0$} 

Note that $[x,y]\neq 0$. Thus in this case, we see that either $a_1\neq 0$ or $a_2\neq 0$. 
Without loss of generality, we may assume that $a_1\neq 0$ because for the case $a_2\neq 0$, we will obtain 
the same $\upomega$-Lie algebras. Replacing $x$ and $y$ with $a_1^{-1}x$ and $a_1y$ respectively, we actually can assume that $a_1=1$. Thus the brackets in $L$ are: 
\begin{equation}\label{eq4.27}
\begin{aligned}
[x,y] & =~  x+a_2y \\
[x,z] & =~  b_1x+b_2y+b_3z \\
[y,z] & =~  c_1x+c_2y+c_3z.
\end{aligned}
\end{equation}
Replacing $x$ by $x+a_2y$, we may assume that $a_2=0$ in (\ref{eq4.27}). 
Then (\ref{eq4.7}) becomes
\begin{equation}\label{eq4.28}
\begin{aligned}
 b_2 + b_3c_2 - b_2c_3&=0\\
  b_3c_1 - c_2 - b_1c_3&=0 \\
 b_3   - 1&=0.
\end{aligned}
\end{equation}
Since $b_3=1$, we may replace $y$ by $y-c_3x$ and assume that $c_3=0$ in (\ref{eq4.27}). 
Substituting this assumption back to  (\ref{eq4.28}), we see that
$$ b_2 + c_2=0\textrm{ and } c_1 - c_2 =0.$$
Clearly, these base changes above can equivalently to be realized via applying elements in $G_\upomega$. 
Hence, we may assume that $L$ has the following brackets:
\begin{equation} \label{eq4.29}
\begin{aligned}
[x,y] & =~  x \\
[x,z] & =~  b_1x-c_1y+z \\
[y,z] & =~  c_1x+c_1y.
\end{aligned}
\end{equation}

\textsc{Subcase 1}: $c_1=0$. If $b_1=0$, we derive from (\ref{eq4.29}) the following $\upomega$-Lie algebra:
$$[x,y]=x, [x,z]=z, [y,z]=0$$
which is isomorphic to (\ref{D}).  If $b_1\neq 0$, then $[x,z]=b_1x+z$.  Replacing 
$x$ by $x-z$, we obtain 
\begin{equation}
[x,y]=x+z, [x,z]=bx+(b+1)z, [y,z]=0
\end{equation}
which is isomorphic to the  $\upomega$-Lie algebra $C_{-1}$ by discussion in Subsection 4.2.

\textsc{Subcase 2}: $c_1\neq0$. In this case, replacing $z$ by $c_1^{-1}z$, we obtain
\begin{equation}
\label{ }
[x,y]=x, [x,z]=bx-y+z, [y,z]=x+y.
\end{equation}
Replacing $x$ by $x-z$, we obtain 
$$[x,y]=2x+y+z,[x,z]=bx-y+(b+1)z,[y,z]=x+y+z$$
which will be isomorphic to one of $\{A,B,C_\upalpha,D\mid \upalpha\in\K^\times\}$ by Subsections 4.1
and 4.2.

\subsection{Proof of Theorem \ref{thm3}}

Together with the previous Subsections 4.1, 4.2, and 4.3, we can give a proof to Theorem \ref{thm3}.

\begin{proof}[Proof of the first statement of Theorem \ref{thm3}]
We assume that $\upomega(x,y)=1$ and $\upomega(x,z)=\upomega(y,z)=0$ for $L\in \A_3(\K)$. If
the $z$-component of $[x,y]$ is nonzero, by Subsections 4.1 and 4.2, we see that $L$ must be isomorphic to one of $\{A,B,C_\upalpha,D\mid \upalpha\in\K^\times\}$. If the $z$-component of $[x,y]$ is zero, then by the discussion in this subsection, there exists a group element $g\in G_\upomega$ such that the $z$-component of the new bracket $[x,y]_g$ is nonzero. Hence, $L$ is isomorphic to one of $\{A,B,C_\upalpha,D\mid \upalpha\in\K^\times\}$ as well.
\end{proof}

\begin{proof}[Proof of the second statement of Theorem \ref{thm3}]
Let us begin with computing the dimension of the derived algebra $[L,L]$. Clearly, $\dim([D,D])=\dim([C_{-1},C_{-1}])=2$ and others are $3$. Thus $D$ is non-isomorphic to any one of $\{A,B,C_{\upalpha}\mid \upalpha\in\K\setminus\{0,-1\}\}$, and there is only one possibility that $D\cong C_{-1}$.  Assume by way of contradiction that they are isomorphic. By Theorem \ref{thm1}, they share the same orbit, i.e.,  there exists an invertible element $g\in G_\upomega$ such that $C_{-1}=g\cdot D$ and $g^{-1}$ has the form as in (\ref{generic}). Recall that $D: [x,y]=y, [x,z]=0, [y,z]=z$ and $C_{-1}: [x,y]=z, [x,z]=-z,[y,z]=0$.
Note that $0=[y,z]_g=g[g^{-1}(y),g^{-1}(z)]=g([s_3x+s_4y+t_2z,dz])=g(s_4dz)=(s_4d)(d^{-1})z=s_4z$. Thus
$s_4=0$. Similarly, as $-z=[x,z]_g=g[g^{-1}(x),g^{-1}(z)]=g([s_1x+s_2y+t_1z,dz])$, we see that $s_2=-1$.
Now $z=[x,y]_g=g[g^{-1}(x),g^{-1}(y)]=g[s_1x+s_2y+t_1z, s_1x+s_2y+t_1z]=g(y-t_2z)$. Hence, $g^{-1}(z)=y-t_2z$ and
$y=g^{-1}(z)+t_2z=(d+t_2)z\in\ideal{z}$. This contradiction shows that $D$ and $C_{-1}$ are not isomorphic. 

We need to show that any two of $\{A,B,C_{\upalpha}\mid \upalpha\in\K^\times\}$ are non-isomorphic. 
Assume by way of contradiction that there exists an invertible element $g\in G_\upomega$ such that $B=g\cdot A$. Then $z=[y,z]_g=g[g^{-1}(y),g^{-1}(z)]=g[s_3x+s_4y+t_2z,dz]=g(v)$, where $v\in\ideal{x,y}$. Thus $v=g^{-1}(z)=dz$ and
$z=d^{-1}z\in \ideal{x,y}$. This contradiction shows that $A$ and $B$ are not isomorphic. A same argument shows that $A$ and $C_{\upalpha}$ are not isomorphic for all $\upalpha\in\K^\times$. We only need to focus on $B$ and $C_{\upalpha}$. However, by Subsection 4.1, $B$ and $C_{\upalpha}$ correspond to different conjugate canonical forms under the action of $\SL_2(\K)$. Thus $B$ and $C_{\upalpha}$ are not isomorphic for all $\upalpha\in\K^\times$.

Hence, it suffices to shows that $C_{\upalpha}$ and $C_{\upbeta}$ are not isomorphic for any $\upalpha\neq\upbeta$ in $\K^\times$. Recall that $C_{\upalpha}: [x,y]=z,[x,z]=\upalpha x, [y,z]=-(\upalpha+1)z$. Assume by way of contradiction that there exists an invertible element $h\in H_\upomega$ defined in (\ref{eq4.15}) such that $C_{\upbeta}=h\cdot C_{\upalpha}$. Thus,
$$\upbeta x=[x,z]_h=h[h^{-1}(x),h^{-1}(z)]=h[s_3x+s_4y,z]=h(s_3\upalpha x-s_4(\upalpha+1)z).$$
This means that $\upbeta(s_3x+s_4y)=s_3\upalpha x-s_4(\upalpha+1)z$, which implies that $s_3=s_4=0$. However,
$$\det\begin{pmatrix}
    s_1  &  s_3  \\
     s_2 &  s_4
\end{pmatrix}=1.$$
This contradiction shows that $C_{\upalpha}$ and $C_{\upbeta}$ are not isomorphic when $\upalpha\neq\upbeta$ in $\K^\times$.
\end{proof}

\section{Two Remarks}\label{sec5}
\setcounter{equation}{0}
\renewcommand{\theequation}
{5.\arabic{equation}}
\setcounter{theorem}{0}
\renewcommand{\thetheorem}
{5.\arabic{theorem}}

\noindent This last section contains two remarks about constructions of finite-dimensional $\upomega$-Lie algebras over any field $\K$ (not necessarily algebraically closed) of characteristic $\neq 2$. 

For the first remark, let us consider the 3-dimensional case and write 
$$C=\begin{pmatrix}
     x_1 &x_2&x_3    \\
     y_1 &y_2&y_3    \\ 
     z_1 &z_2&z_3    \\
\end{pmatrix}\in M_3(R)$$
for the matrix corresponding to the structure constants in (\ref{eq3.1}) of a generic $\upomega$-Lie algebra $L$ over 
a field $\K$ such as the real numbers $\R$ or a finite field $\F_q$.
Via the prime ideal $\p=\ideal{f_1,f_2,f_3}$, we may specialize $C$ to directly produce or recover all non-Lie 3-dimensional  $\upomega$-Lie algebra structures, where $f_1,f_2$ and $f_3$ are defined in (\ref{eq3.2}). For example, if 
$$C=\begin{pmatrix}
     0 &1&0    \\
     0 &0&0    \\ 
     0 &0&z_3    \\
\end{pmatrix},$$ then the fact that $f_2=0$ implies that $z_3=1$. Thus we recover the 
 $\upomega$-Lie algebra $L_1$ appeared in the classification lists of complex and real 
3-dimensional  $\upomega$-Lie algebras; see \cite[Theorem 2 and Table 1]{CLZ14}. This example also means that  the 
 $\upomega$-Lie algebra $L_1$, defined by
 $$[x,y]=y,[x,z]=0,[y,z]=z\textrm{ and } \upomega(x,y)=1,\upomega(x,z)=\upomega(y,z)=0$$
will occur in the classification list of all 3-dimensional non-Lie  $\upomega$-Lie algebras over any field $\K$ of characteristic $\neq 2$. In the algebraically closed case, we have seen in Section \ref{sec4} that 
this $\upomega$-Lie algebra is isomorphic to $D$.

Moreover, if $$C=\begin{pmatrix}
     0 &0&1    \\
     0 &0&0    \\ 
     0 &z_2&0    \\
\end{pmatrix},$$ then it follows from the fact that $f_2=0$ that $z_2=-1$ and this recovers the  $\upomega$-Lie algebra $L_2$ in \cite[Theorem 2]{CLZ14}. Using the same method, we will recover all  3-dimensional $\upomega$-Lie algebras over $\C$ and $\R$, as well as derive some new  
3-dimensional $\upomega$-Lie algebras over other fields. 

In particular, we believe that this method will be helpful in classifying all 3-dimensional non-Lie  $\upomega$-Lie algebras over the rational field and finite fields. 
Recent applications of computational ideal theory and invariant theory in constructing and classifying  algebraic structure defined by polynomial equations over finite fields can be found in \cite{CZ23b} and \cite{CZ25}.

The second remark concerns higher-dimensional $\upomega$-Lie algebras over $\K$. For example, we consider the $4$-dimensional case. By Theorem \ref{thm2.5}, we have two nonzero canonical forms for $\upomega$: 
$$J_1=\dia\{J,0\}\textrm{ and }J_2=\dia\{J,J\}.$$
Fixing a canonical form and using the method of Gr\"{o}bner basis occurred in Section \ref{sec3}, we may obtain the vanishing ideals of some special affine subvarieties of $\A_4(\K)$. These vanishing ideals can be used to construct new 
$\upomega$-Lie algebras as in the first remark above.  

Let us close this article with the following example that illustrates this idea.

\begin{exam}{\rm 
We would like to understand the affine subvariety $X_1(\K)$ of  $\A_4(\K)$ that consists of all $4$-dimensional non-Lie 
$\upomega$-Lie algebras over $\K$ that contain the $3$-dimensional $\upomega$-Lie algebra $D$ above as a subalgebra. 
Note that we have classified these $4$-dimensional $\upomega$-Lie algebras over $\C$; see \cite[Theorem 1.5]{CZ17}. 

For any field $\K$ of characteristic $\neq 2$, we may assume that 
$$[x,e]=a_1x+a_2y+a_3z+a_4e, [y,e]=b_1x+b_2y+b_3z+b_4e,\textrm{ and }[z,e]=c_1x+c_2y+c_3z+c_4e.$$
Write $J$ for the vanishing ideal of $X_1(\K)$ in the polynomial ring $\K[x_i,y_i,z_i\mid 1\leqslant i \leqslant 4]$. Using the  Gr\"{o}bner basis method, we will derive that $J$ can be expressed as the intersection of two prime ideas $\p_1$ and $\p_2$, where
\begin{eqnarray*}
\p_1 & := & \ideal{x_1, x_2 + z_3, x_4 + 1, y_2 - z_3, y_4 - 1, z_1, z_2, z_4} \\
\p_2 & := & \ideal{x_4z_3 - x_2, x_1, y_1, y_2 - z_3, y_3, y_4 - 1, z_1, z_2, z_4}. 
\end{eqnarray*}
Hence, $X_1(\K)=V(\p_1)\cup V(\p_2)$ is a 4-dimensional affine variety consisting of two irreducible components. In particular, for $X_1(\C)$,  by setting generators of $\p_1$ or $\p_2$ to zero, we will recover those $4$-dimensional non-Lie 
$\upomega$-Lie algebras over $\C$ appeared in \cite[Theorem 1.5 (1)]{CZ17}.
\hbo}\end{exam}

\vspace{2mm}
\noindent \textbf{Acknowledgements}. 
This research was partially supported by  NNSF of China (Grant No. 12561003).


\raggedright
\end{document}